\newcommand{\R}{\mathbb{R}} 
\newtheorem{remark}{Remark}
\begin{document}

\title{Interface  layers and coupling conditions for discrete kinetic   models  on networks: a spectral approach}

\author{R. Borsche\footnotemark[1] 
	\and T. Damm\footnotemark[1] 
     \and  A. Klar\footnotemark[1] 
 \and Y. Zhou\footnotemark[2] }
\footnotetext[1]{RPTU  Kaiserslautern, Department of Mathematics, 67663 Kaiserslautern, Germany 
  (\{borsche, damm, klar\}@rptu.de)}
\footnotetext[2]{RWTH Aachen, Department of Mathematics, Aachen, Germany 	(zhou@igpm.rwth-aachen.de)}
 
\date{}


\maketitle

\begin{abstract}
	We consider kinetic and related macroscopic equations on networks. 
	A class  of linear kinetic BGK models is considered, where the    limit equation for small Knudsen numbers is given by   the wave equation.
	Coupling conditions for the macroscopic equations are obtained  from the kinetic coupling conditions via an asymptotic analysis near the nodes of the network and the  consideration of coupled  solutions of kinetic half-space problems. 
	Analytical results are obtained for a discrete velocity version of the coupled half-space problems. Moreover,
	an efficient spectral  method is  developed to solve the coupled  discrete velocity  half-space problems. 
	In particular, this allows to determine the relevant coefficients in the coupling conditions  for the macroscopic equations
	 from the underlying kinetic network problem. These coefficients correspond to  the so-called extrapolation length for kinetic boundary value problems.
	Numerical results show the accuracy and fast convergence of the approach. Moreover, a comparison of the kinetic solution on the network with the macroscopic solution is presented.
\end{abstract}

{\bf Keywords.} 
Kinetic layer, spectral method, coupling condition, kinetic half-space problem, networks, hyperbolic relaxation.

{\bf AMS Classification.}  
82B40, 90B10,65M08


\section{Introduction}
\label{introduction}
Coupling conditions for macroscopic partial differential equations on networks have been defined in many works including, for example, conditions for  drift-diffusion equations, scalar  hyperbolic equations, and  hyperbolic systems like  the wave equation or Euler type models, see for example \cite{BGKS14,CC17, BNR14,BHK06a,BHK06b,CHS08,EK16,VZ09,BCG10,HKP07,CGP05}. 
In particular, in \cite{CGP05,HKP07} coupling conditions for scalar hyperbolic equations on networks are discussed and investigated.
The wave equation is treated in \cite{EK16,VZ09}, and general non-linear hyperbolic systems are considered, for example,  in \cite{BNR14,BHK06a,BHK06b,CHS08,BCG10,CHS08,G10}. 
On the other hand, coupling conditions for kinetic equations on networks have been discussed, for example, 
in \cite{FT15,HM09,BKKP16,BK18,ABEK22,holle}. 
In \cite{BKKP16} a first attempt to derive a coupling condition for a
macroscopic equation from the underlying kinetic model has been presented for the case of a kinetic equation for chemotaxis.
In \cite{BK18,ABEK22} more  general and more accurate approximate procedures have been presented and discussed for linear kinetic equations.
They are motivated by the classical procedure to find kinetic slip boundary conditions for macroscopic equations via the analysis of the 
kinetic layer \cite{BSS84,BLP79,G08,UTY03,Bab,Ber}
and 
based on an asymptotic analysis of the situation near the nodes. 

In the present paper we consider the same situation as in  \cite{BK18}. However, in contrast to \cite{BK18}, where an approximation procedure for the  coupling conditions based on a low order half-moment approach is obtained, we consider  here 
the full kinetic layer problem via a hierarchy of discrete velocity models.
To investigate the coupled layer problems analytically we employ results from
\cite{MR1722195} and  \cite{BenzoniSerre} for hyperbolic relaxation problems.
The numerical solution of the problem  is obtained by adapting a spectral approach from \cite{coron} to the network problem.

The paper is organized in the following way. 
In Section \ref{sec:equations} we discuss  the kinetic and macroscopic equations   and  classes of coupling conditions for these equations.
In Section \ref{kinlayer}     an 
asymptotic analysis of the kinetic equations near the nodes and resulting  kinetic layers at the nodes are discussed.
This leads to an abstract formulation of the coupling conditions for the macroscopic equations at the nodes  involving coupled kinetic half-space problems. 
In the following Section \ref{dvm} a  velocity discretization of the kinetic equation via kinetic discrete velocity models is considered and the associated  kinetic moment problem is given.
In Section \ref{disclayer} the discrete layer problem on an edge in moment coordinates is investigated and solved  up to the 
determination of  the eigenvalues of an associated symmetric positive definite matrix. 
Finally, in Sections \ref{coup}   the solution of the  kinetic equations  at the node are discussed analytically and numerically and  the macroscopic coupling conditions are obtained.
In particular, in Subsection \ref{wellposed} the solvability of the coupled half-space problem is investigated analytically. In Subsections  \ref{numcoupling} and \ref{nearnode} the numerical strategy to obtain the coefficients for the macroscopic coupling conditions and the limiting kinetic solution at the node is described. Subsection \ref{halfspacemarshak} gives a short review  of simple approximate methods to determine the coupling conditions and Subsection 
\ref{numerical1} discusses issues concerning the numerical implementation and gives numerical results for the coupling coefficients. 
Section \ref{unbound} contains the same steps for the case of an unbounded velocity domain.
Finally, Section \ref{network} presents a numerical comparison of  kinetic and macroscopic 
network solution.

\section{A kinetic model equation and coupling conditions}
\label{sec:equations}

In this section we consider a kinetic equation with bounded velocity space. In Section \ref{unbound} the case of an unbounded velocity space  will be considered.
As a prototypical example, we  consider  a  one-dimensional  linear kinetic  BGK model \cite{BGK54}  for the distribution function $f= f(x,v,t)$ with 
$x \in \mathbb{R}$ and $ v  \in [-1,1]$, i.e.
\begin{align}\label{bv}
\begin{aligned}
\partial_t f + v \partial_x f = \frac{1}{\epsilon} Q(f) = -\frac{1}{\epsilon} \left(  f-   M_f\right) =-\frac{1}{\epsilon} \left(  f-    \frac{1}{2}(\rho + \frac{v}{a^2}  q )\right)
\end{aligned}
\end{align}
with $ \epsilon >0$, $a^2 = \frac{1}{2} \int^1_{-1} v^2 d v  = \frac{1}{3}$ and
$$
\rho = \frac{1}{2} \int_{-1}^1 f(v) d v ,\quad  q = \frac{1}{2}  \int_{-1}^1 v f(v) d v \ .
$$
Integrating the equation with respect to $dv $ and $v dv$ and taking into account that $f $ converges towards $M_f$ as $\epsilon \rightarrow 0$, the associated macroscopic equation  for $\epsilon \rightarrow 0$ is
the wave equation 
\begin{align}\label{waveeq}
\begin{aligned}
\partial_t \rho_0 + \partial_x q_0 &=0\\
\partial_t q_0 + a^2 \partial_x  \rho_0  &=0\ .
\end{aligned}
\end{align}
Here, we have denoted the limiting macroscopic quantities for $\epsilon \rightarrow 0$, i.e. the solution of the macroscopic limit equations,
by $\rho_0,q_0$. Quantities $\rho,q$ without a subscript denote the kinetic density and mean flux.
The eigenvalues of system (\ref{waveeq}) are
$
\lambda_{\mp} =   \mp a.
$
The corresponding eigenvectors  are
$
\begin{pmatrix}
	1 ,  \mp a 
	\end{pmatrix}^T.
$

\begin{figure}[h!]
	\begin{center}
		\begin{tikzpicture}
		\draw[->] (0,0)--(-1.4,0) node[left]{$1$};
		\draw[->] (0,0)--(1,-1) node[right]{$2$};
		\draw[->] (0,0)--(1,1) node[right]{$3$};
		\node[fill=black,circle] at (0,0){};
		\end{tikzpicture} 
	\end{center}
	\caption{Node connecting three edges and orientation of the edges.}
	\label{fig:tripod}
\end{figure}
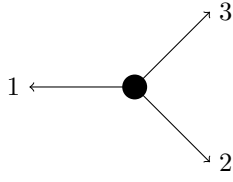
If these equations are considered on a network, it is sufficient to study a single node, see Figure \ref{fig:tripod}.
At each node so called coupling conditions are required. 
In the following we consider a node connecting $n$ edges, which are oriented away from the node, as in Figure \ref{fig:tripod}.
Each edge $i$ is parametrized by the interval $[0,b_i]$ and the kinetic and macroscopic quantities are denoted by
$f^i$ and $\rho_0^i, q_0^i$ respectively.
On the kinetic level for each edge a condition on  the ingoing characteristics $f^i(0,v),v>0$ is required at the node, i.e.
at   $x=0$.
For the network problem  a possible choice of such a coupling condition for the kinetic problem is
\begin{align}
\label{kincoup}
f^i(0,v) = \sum_{j=1}^n \beta_{ij}f^j (0,-v), v >0\ , i= 1, \ldots ,n, 
\end{align}
compare \cite{BKKP16}.
Then, the total mass in the system is conserved, if 
\begin{align}\label{eq:conservative_coupling_f}
\sum_{i=1}^n \beta_{ij} = 1,
\end{align}
since in this case the balance of fluxes,  i.e. $\sum_{j=1}^n \int_{-1}^1 v f^j(0,v) dv =0$, holds.
Note for later use, that we  have for odd moments in general $$\sum_{j=1}^n \int_{-1}^1 v^{2k-1} f^j(0,v) dv =0, k\ge 1. $$

In particular, we will consider the case of a node with symmetric coupling conditions, 
that means  $\beta_{ij} = \frac{1}{n-1}, i \neq j$  and $\beta_{ii} =0$.

In the macroscopic case, the  coupling conditions for the  system of linear hyperbolic equations are  conditions for the $2n$ macroscopic quantities $(\rho_0^i,q_0^i)(x=0)$  at the nodes. They   are given by $n$ 
coupling conditions
to find  the ingoing (into the adjacent edges) characteristic variables at the nodes
\begin{align*}
r_+^i (0) = q_0^i(0) + a \rho_0^i(0).
\end{align*}
If these  $n$ coupling conditions are combined 
with the  $n$  conditions given by the actual states of the outgoing characteristics $r_1(0)$ at the nodes, i.e. 
\begin{align*}
	q_0^i(0) - a \rho_0^i(0) = r^i_-(0) , i=1, \ldots, n,
\end{align*}
we obtain the required number of $2n$ conditions.
One of the coupling  conditions is usually 
 given by  the balance  of fluxes
$$\sum_{i=1}^n q_0^i (0) =0.$$
Note that this  condition corresponds to condition (\ref{eq:conservative_coupling_f}) on the kinetic level.
For symmetric nodes, 
further conditions are classically given by invariants at the nodes. For the present system of two equations,
we need one more invariant at the node leading to $n-1$ conditions for the macroscopic quantities.
 This invariant is usually given by a linear combination  $q_0^i(0) + \delta \rho_0^i (0)$.
In  other words, the missing $n-1$ equations are given by the conditions
\begin{align*}
 \rho_0^i(0) + \delta q_0^i(0) = \rho_0^j(0) + \delta q_0^j(0)   
\end{align*}
for $ i,j = 1, \ldots, n$. 
Together, these  macroscopic coupling conditions yield  a  linear system 
\begin{align} 
\label{macrocond}
\mathcal{B} U =b
\end{align} 
for $$U = (\rho_0^1(0), \ldots, \rho_0^n(0), q_0^1(0), \ldots
,q_0^n(0))^T $$ 

and 
\begin{align*} 
\mathcal{B} = \begin{pmatrix}
B_{11}&B_{12}\\
-aI &I
\end{pmatrix} \in \R^{2n\times 2n}
\end{align*}
with
\begin{align*} 
B_{11}= \begin{pmatrix}
0&0&0& \cdots &0\\
1&-1&0&\cdots &0\\
0&1&-1 & &0\\
\vdots& \ddots& \ddots&\ddots&\\0&\cdots &0&1&-1
\end{pmatrix},\;
B_{12}= \begin{pmatrix}
1&1&  1&\cdots &1\\
\delta&-\delta&0& \cdots &0\\
0&\delta&-\delta & &0\\\vdots & \ddots & \ddots & \ddots&\\0&\cdots &0&\delta&-\delta
\end{pmatrix} \in \R^{n \times n}\;.
\end{align*}
and
\begin{align*} 
b= \begin{pmatrix}
0& \cdots &0&r_1^1 (0)& \cdots r_1^n (0)
\end{pmatrix} \in \R^{2n}.
\end{align*}
We have a uniquely solvable system, if 
$$
0\neq \det(\mathcal{B}) 
= 
 \det(B_{11} +a  B_{12} )= (-1)^{n-1}na(1+a \delta)^{n-1}\;,   
$$
i.e., if $\delta \neq - \frac{1}{a}$.


The question naturally arises, how kinetic and macroscopic coupling conditions are connected and, in particular, if a  value  for  $\delta$ can be identified  associated to the kinetic coupling conditions 
 (\ref{kincoup}) in the asymptotic limit $\epsilon \rightarrow 0$, when the kinetic problem converges towards the macroscopic one.
 
 \begin{remark}
The   number  $\delta$ in the coupling conditions plays a similar role as the so-called extrapolation length for kinetic boundary layers, see \cite{BSS84}.
 \end{remark}
 
In \cite{BK18,ABEK22}, see also Section \ref{halfspacemarshak}, several approximation  procedures to obtain explicit formulas for the   values of $\delta$  have been proposed. 
In the present investigation   we aim at determining the value of $\delta$ for  the  full kinetic problem.  We investigate a numerical procedure for a hierarchy of   kinetic discrete velocity models to obtain a value for  $\delta$.  In this way   we obtain a very accurate approximation of the value corresponding to the continuous kinetic problem.


\section{Kinetic layers  at the nodes and coupling conditions for macroscopic equations}
\label{kinlayer}

The derivation of macroscopic coupling conditions from the kinetic conditions  is based on a kinetic layer analysis at the node, compare  \cite{BSS84,Bardos,C69,CGS88,G10,GMP,LLS16,coron,SO} for kinetic boundary value problems.
At the left boundary of  each edge $[0,b_i]$     a rescaling of the spatial variable in equation (\ref{bv}) with $\epsilon$ results  in  the scaled equation
$$
\partial_t f +  \frac{1}{\epsilon} v \partial_x f = \frac{1}{\epsilon} Q(f)\ 
$$
on $[0, \frac{b_i}{\epsilon}]$.
This yields to first order in $\epsilon$ the  following stationary
kinetic  half space  problem for 
the scaled spatial variable $x \in [0,\infty]$
\begin{align}\label{bgkhalfspace}
\begin{aligned}
v \partial_x \varphi =-\left(  \varphi-   \frac{1}{2}  (\rho + \frac{v}{a^2}  q )\right) \ ,
\end{aligned}
\end{align}
where $\rho$ and $q$ are  here the zeroth and first moments of  $\varphi$. 
At $x=0$ one has to prescribe for the  half space problem, as for the original kinetic problem,  the ingoing characteristics, i.e.
$$
\varphi(0,v), v > 0\ .
$$

For the coupling procedure we are only interested in bounded solutions of the half-space problem. Then, at  $x=\infty$,  a further condition is  needed for the half-space problem prescribing 
a linear combination of the invariants of the half-space problem
$\int_{-1}^1 v \varphi d v $
and $\int_{-1}^1 v^2\varphi dv $ .
The resulting solution of the half -space problem at infinity has the form 
$$
\varphi(\infty,v) =\frac{1}{2}  (\rho_\infty+ \frac{v}{a^2} q_\infty) ,
$$
where  $ \rho_\infty$ and $ q_\infty  $ are the corresponding density and mean flux of  the solution of the half-space problem solution at infinity.

The resulting outgoing  solution of the half space problem at $x=0$ is 
$$
\varphi(0,v),   v<0\ .
$$

In a classical matching procedure, the above solution at infinity of the half-space problem
is now  connected to the outer solution given by the macroscopic solution
at the left boundary of the edge 
 $(\rho_0(0) ,q_0(0))$.
 This means the missing condition for the half space problem  is given by the  1-Riemann invariant
 $$
 q_\infty -a \rho_\infty = q_0(0)-a \rho_0(0).
 $$
 In other words, we have the condition
 $$\frac{1}{2} \int_{-1}^1 \left(v - \frac{v^2}{ a } \right)\varphi d v= r_-(0)  
 $$
 at $x=\infty$ for the half-space problem. 
%
%
%
Solving  then the half-space problem gives 
 $\rho_\infty, q_\infty $ and thus
$$
q_0(0) + a \rho_0(0)= q_\infty + a \rho_\infty,
$$
which are   the required values for the ingoing characteristics of the  macroscopic equations at the nodes.

%
%
%
%

We combine now the layers on all edges adjacent to the node under consideration and use the kinetic coupling conditions
to obtain
\begin{align*}
\varphi^i(0,v) = \sum_{j=1}^n \beta_{ij}\varphi^j (0,-v), v >0.
\end{align*}
This gives the equations for the ingoing solutions of the half space problems on the different
arcs.
To conclude, finding the macroscopic coupling conditions associated to the underlying kinetic problem is equivalent to solving the above described coupled kinetic half-space problems on all edges of a node.
In the following sections we will consider a velocity discretized version of the kinetic problem and discuss the analytical and numerical solution of the coupled half-space problems and the resulting macroscopic coupling conditions in detail. 

%

\section{The discrete velocity model}
\label{dvm}
We discretize the BGK-equation (\ref{bv}) in velocity space and obtain a kinetic discrete velocity model  for the discrete distribution functions 
$f_i(x,t), i= 1, \ldots,  2N$ as 

\begin{align}\label{eq:kinetic}
\begin{aligned}
	\partial_t f_i + v_i\partial_xf_i &= -\frac{1}{\epsilon}\left(f_i-M_i\right)
\end{aligned}
\end{align}
with  the velocity discretization $$-1\le v_1< v_2 < \cdots < v_N < 0 < v_{N+1}< \cdots < v_{2N-1} < v_{2N} \le 1.$$
We  assume for  symmetry 
\begin{align*}
v_{2N} = -v_1, 
\ldots, 
v_{N+1}= -v_N.
\end{align*}
Let $w_i \ge 0, i=1, \ldots, 2N$ be symmetric weights, such that $\ \sum_{i=1}^{2N}  w_i =1$.
The  discrete  linearised Maxwellian $M_i$ is given by
\begin{align}
M_i = w_i (\rho + \frac{v_i}{a_N^2} q)
\end{align}
with 
\begin{align*}
\rho = \sum_{i=1}^{2N}f_i\ ,
q = \sum_{i=1}^{2N}v_if_i\ 
\end{align*}
and  $a^2_N= \sum_{i=1}^{2N} w_i v_i^{2} $.
This choice of the discrete Maxwellian yields 
\begin{align}
\label{mommax}
\sum_{i=1}^{2N}M_i=\rho\ ,\; 
\sum_{i=1}^{2N}v_iM_i = q\ , \;
\sum_{i=1}^{2N}v_i^2 M_i =a_N^2 \rho 
\end{align}
and we obtain for  equation (\ref{eq:kinetic}) in  the limit $\epsilon \rightarrow 0$   the wave equation
\begin{align}
\label{euler11}
\begin{aligned}
\partial_t \rho + \partial_xq &= 0\\
\partial_t q +  a_N^2   \partial_x \rho  &= 0.
\end{aligned}
\end{align}
Continuing, we define, additionally to $\rho $ and $q$, the  moments
\begin{align*}
	g_j = \sum_{i=1}^{2N}P_j(v_i) f_i\ , j	= 0, \ldots, 2N-1
\end{align*}
for some basis  $P_0, \ldots, P_{2N-1}$ of the space of polynomials up to degree $2N-1$, where $P_0$ is a multiple of $1$
and $P_1$ a multiple of $v$.
Let $g$ be given by  $g = (g_2, g_3,\ldots,g_{2N-1}).$ 

The transformation from original to moment variables is given by the  Vandermonde like  matrix
\begin{align*}
	S  
	=\begin{pmatrix}
		P_0(v_1) &\cdots&P_0(v_{2N})\\
		\vdots&&\vdots\\
		P_{2N-1}(v_1)&\cdots&P_{2N-1}(v_{2N})\\
	\end{pmatrix}	
\end{align*}
with $S \in \R^{2N \times 2N}$ 
transforming  the  variables $f=(f_1, \ldots, f_{2N})^T$ into the moments $ Sf = G=\begin{pmatrix}g_0,g_1  , \ldots, g_{2N-1} \end{pmatrix}^T.$

\begin{remark}
In  principle any choice of the discretization points $v_i$ and the polynomials $P_i$ could be used.
However, the situation simplifies considerably, if a  suitable orthonormal polynomial system and the associated discretization points are used. Moreover, from a numerical point of view, such a choice guarantees that the matrix $S$ is not ill conditioned.
An arbitrary choice, like, for example, equidistantly distributed points $v_i$ and a monomial basis or also 
 equidistantly distributed points combined with orthonormal polynomials will lead to strongly ill-conditioned matrices $S$
 for larger values of $N$.
	\end{remark}
%


For  the following we 	choose as in the works of F.\ Coron \cite{coron}  the $P_j $ as the normalized Legendre polynomials on $[-1,1]$.
The  discretization points  $v_i, i=1, \ldots, 2N$ are chosen as the associated  Gauß-Legendre points on $[-1,1]$ and $w_i$ the corresponding weights, such that 
$$
\sum_{i=1}^{2N} w_i P_j (v_i)P_k(v_i) = \delta_{jk}.
$$

The   orthonormal  Legendre polynomials $P_k = P_k(v), k=0, \ldots ,2N$ on $[-1,1]$ are defined via $P_0=\frac{1}{\sqrt{2}},P_1 =  \sqrt{\frac{3}{2}}v=\frac{1}{\sqrt{2} \alpha_1}v $ and the recursion formula
\begin{align*}
v P_k = \alpha_{k+1} P_{k+1} + \alpha_k P_{k-1}, k=1, \ldots, 2N-1
\end{align*}
with $\alpha_k = \frac{k}{\sqrt{(2k-1)(2k+1)}}$.
In particular, $P_2=\frac{1}{\alpha_1 \alpha_2 \sqrt{2}} (v^2-\alpha_1^2)=\sqrt{\frac{5}{8}}(3 v^2-1)$.


We have $g_0= \frac{\rho}{\sqrt{2}}$ and $g_1 = \frac{1}{\sqrt{2} \alpha_1 } q$. Moreover, for $k=2, \ldots, 2N-1$ the additional discrete moments of the Maxwellian, i.e.
$$\sum_{i=1}^{2N}P_{k}(v_i)  M_i $$ can, in general, be computed as functions of $\rho$ and $q$.
	Using Legendre polynomials and the associated Gauß-Legendre points all these higher order discrete moments  of the Maxwellian
	are equal to $0$ due to discrete orthogonality.
	Moreover, 
 note that the $2N$-th moment
\begin{align*}
	g_{2N}=\sum_{i=1}^{2N}P_{2N} (v_i)f_i\ 
\end{align*}
is also equal to zero, since the Gauß-Legendre points are   the zeros of the $2N$-th Legendre polynomial.
Finally, 
note that $a_N^2=a^2=\frac{1}{3}= \alpha_1^2 $ and 
 that $$g_2 =\frac{1}{\alpha_1 \alpha_2 \sqrt{2}}( \sum_{i=1}^{2N} v_i^2f_i -\alpha_1^2 \rho) =  \sqrt{\frac{5}{8}}(3 \sum_{i=1}^{2N} v_i^2f_i -\rho)$$ and therefore $$\sum_{i=1}^{2N} v_i^2f_i= 
 \frac{2}{3} \sqrt{\frac{2}{5}}g_2 + \frac{1}{3} \rho
 =\alpha_1 \alpha_2 \sqrt{2} g_2 + \alpha_1^2 \rho.$$


Using the recursion formula of the Legendre polynomials 
the discrete  kinetic equation is rewritten in moment variables $ G= (u,g) $ with $u=(g_0,g_1)$ and $g= (g_2,\ldots,g_{2N-1}) $.
%
In case  the points $v_i$ are chosen as the Gauß-Legendre points on $[-1,1]$ we obtain
\begin{align}\label{eq:macro_4eq_h}
\begin{aligned}
\partial_t g_0 + \alpha_1 \partial_x g_1 &= 0\\
\partial_t g_1 + \partial_x ( \alpha_2  g_2 + \alpha_1 g_0)&= 0\\
\partial_t g_{2} + \partial_x (\alpha_{3} g_{3} + \alpha_2 g_1) &= - \frac{1}{\epsilon} g_{2}\\
\partial_t g_{k} + \partial_x (\alpha_{k+1}g_{k+1} + \alpha_{k} g_{k-1})&= - \frac{1}{\epsilon}g_{k} , k=3, \ldots, 2N-2\\
\partial_t g_{2N-1}+ \partial_x (\alpha_{2N-1}g_{2N-2})&= - \frac{1}{\epsilon}g_{2N-1} \end{aligned}.
\end{align}

%
or for the first 3 equations
\begin{align}
	\begin{aligned}
		\partial_t \rho + \partial_xq &= 0\\
		\partial_t q + \partial_x (\alpha_1 \alpha_2 \sqrt{2} g_2 + \alpha_1^2 \rho)&= 0\\
		\partial_t g_{2} + \partial_x (\alpha_{3} g_{3} + \frac{\alpha_2}{\alpha_1 \sqrt{2}} q) &= - \frac{1}{\epsilon} g_{2}
	 \end{aligned}.
\end{align}
Note that  for this system we obtain in  the limit $\epsilon \rightarrow 0$ directly  the wave equation (\ref{waveeq}).

  \section{The discrete layer problem}
\label{disclayer}

The discrete kinetic half-space problem
\begin{align}
\begin{aligned}
 v_i\partial_xf_i &= -\left(f_i-M_i\right)
\end{aligned}
\end{align} is then transformed into 
the  moment layer equations
\begin{align}
\begin{aligned}
\alpha_1  \partial_x g_1&= 0\\
 \partial_x (\alpha_2 g_2 + \alpha_1 g_0 )&= 0\\
 \partial_x (\alpha_{3} g_{3} ) &= - g_{2}\\
 \partial_x (\alpha_{k+1}g_{k+1} + \alpha_{k} g_{k-1})&= - g_{k} , k=3, \ldots, 2N-1\\
 \partial_x (\alpha_{2N-1}g_{2N-2})&= - g_{2N-1}. \end{aligned}
\end{align}

This gives directly  $q=C $ and $\rho + \frac{\alpha_2 \sqrt{2}}{\alpha_1}g_2= D$ for constants $C \in \R$ and $D \in \R^+$.
For $g=(g_2, \ldots, g_{2N-1})$ we have 
\begin{align}
\begin{aligned}
\partial_x (\alpha_{3} g_{3} ) &= - g_{2}\\
\partial_x (\alpha_{k+1}g_{k+1} + \alpha_{k} g_{k-1})&= - g_{k} , k=3, \ldots, 2N-1\\
\partial_x (\alpha_{2N-1}g_{2N-2})&= - g_{2N-1}. \end{aligned}
\end{align}


In matrix form we have in case of Legendre polynomials with Gauss-Legendre points
\begin{align}
\label{dischalf}
 \partial_x g = - A_{22}^{-1} g 
\end{align}
with the symmetric tridiagonal matrix $A_{22} \in \R^{2(N-1) \times 2(N-1)}$ given by 
\begin{align}\label{eq:A_tridiag}
A_{22} = \begin{pmatrix}
0 & \alpha_3& 0&\cdots &\cdots &0\\ 
\alpha_3 &0&\alpha_4 & \ddots&&\vdots  \\
0&\ddots &\ddots&\ddots &\ddots &\vdots \\
\vdots&\ddots&\ddots &\ddots&\ddots &0\\
\vdots&  &\ddots&\ddots
&0 &\alpha_{2N-1}\\
0& \cdots &\cdots&0 &\alpha_{2N-1}&0
\end{pmatrix} .
\end{align}


The fixed point of the linear ODE  system  (\ref{dischalf}) is given by
$g=0$  and then $\rho =D$ and $q=C$.

\begin{lemma} 
	\label{lemma1}
	 $A_{22}$ is strictly hyperbolic, that means it is diagonalizable with real and distinct eigenvalues. Moreover,  $N-1$ eigenvalues of $A_{22}$ are strictly positive. The remaining $N-1$ eigenvalues have the corresponding negative values.
We denote the eigenvectors associated to positive eigenvalues by $r_i, i=1,\ldots, N-1$ 
and the matrix of those eigenvectors as  $$R_2^+= \begin{pmatrix} r_1,\ldots,r_{N-1}\end{pmatrix}.$$


	\end{lemma}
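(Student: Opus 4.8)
The plan is to treat three structural features of $A_{22}$ separately --- its symmetry, its strictly positive off-diagonal entries, and its vanishing diagonal --- and then combine them. Since $A_{22}$ is real symmetric, it is orthogonally diagonalizable with real eigenvalues, which already settles diagonalizability and reality; the work lies in the \emph{distinctness} of the eigenvalues and in the sign structure of the spectrum.

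To obtain that the eigenvalues are distinct I would invoke the classical simplicity argument for Jacobi matrices, using that every off-diagonal entry $\alpha_3,\dots,\alpha_{2N-1}$ is strictly positive. Writing the eigenvector equation $A_{22}x=\lambda x$ row by row, the first row reads $\alpha_3 x_2=\lambda x_1$ and expresses $x_2$ through $x_1$; each subsequent interior row expresses $x_{k+1}$ through $x_{k-1}$ and $x_k$, dividing by the nonzero superdiagonal entry. Hence the entire eigenvector is determined up to scale by its first component, and if $x_1=0$ the recurrence forces $x\equiv 0$. Thus every eigenspace is one-dimensional, every eigenvalue is simple, and the eigenvalues are distinct, giving strict hyperbolicity.

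To get the $\pm$ symmetry of the spectrum I would use the vanishing diagonal. Let $D=\mathrm{diag}(1,-1,1,-1,\dots)$, so $D^{-1}=D$. Conjugation by $D$ multiplies the $(i,j)$ entry by $(-1)^{i+j}$; on tridiagonal entries $|i-j|=1$ this is the factor $-1$, while the zero diagonal is left untouched. Hence $D A_{22} D=-A_{22}$, so $A_{22}$ and $-A_{22}$ are similar and the spectrum is invariant under $\lambda\mapsto-\lambda$.

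Finally, to split the count I would combine simplicity with this symmetry. The involution $\lambda\mapsto-\lambda$ acts on the set of distinct eigenvalues with $0$ as its only possible fixed point, so the nonzero eigenvalues occur in pairs $\{\lambda,-\lambda\}$. If $0$ were an eigenvalue, the remaining $2(N-1)-1=2N-3$ eigenvalues --- an odd number --- would have to pair up, which is impossible; therefore $0$ is not an eigenvalue. The $2(N-1)$ eigenvalues then form exactly $N-1$ pairs $\{\lambda,-\lambda\}$ with $\lambda>0$, yielding $N-1$ strictly positive eigenvalues and $N-1$ corresponding negatives, as claimed. (Invertibility can also be checked directly: the continuant recurrence $D_k=-\alpha_{k+1}^2 D_{k-2}$ with $D_0=1$, $D_1=0$ gives $\det A_{22}=(-1)^{N-1}\prod_{j=1}^{N-1}\alpha_{2j+1}^2\neq 0$.) The only genuine content is the Jacobi simplicity argument; the sign symmetry and the parity count are immediate once the structure has been isolated, so I expect no serious obstacle beyond bookkeeping the tridiagonal recurrence.
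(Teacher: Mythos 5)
Your proof is correct. Note that the paper itself states Lemma \ref{lemma1} without proof: the only structural input it supplies is that $A_{22}$ is symmetric tridiagonal with zero diagonal and positive off-diagonal entries $\alpha_3,\dots,\alpha_{2N-1}$, and the spectral symmetry is only exploited later (in the well-posedness section) by reordering $g$ into even- and odd-indexed components, which turns the coefficient matrix into the block form $\bigl(\begin{smallmatrix}0 & \bar A\\ \bar A^T & 0\end{smallmatrix}\bigr)$ and is handled by citing \cite{LY23}; that viewpoint gives the eigenvalues as $\pm$ the singular values of the bidiagonal, invertible matrix $\bar A$, hence the $N-1/N-1$ split, but not immediately their distinctness. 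Your route is more elementary and self-contained: the standard Jacobi-matrix argument (eigenvectors determined up to scale by their first component, since all super-diagonal entries are nonzero) gives simplicity of all eigenvalues, the signature conjugation $DA_{22}D=-A_{22}$ gives the $\lambda\mapsto-\lambda$ symmetry of the spectrum, and the parity count (an odd number of nonzero eigenvalues cannot pair up) excludes $0$ and yields exactly $N-1$ positive and $N-1$ negative eigenvalues; the continuant computation of $\det A_{22}$ is a valid independent check of invertibility. All steps are sound, and your argument supplies a complete proof where the paper offers only a statement and an external reference.
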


\begin{remark}
For a more general choice of discretization points we observe that the discrete layer problem is more complicated. In particular, the resulting matrix $A_{22}$  is not any more tridiagonal and the linear system is not homogeneous.
However,  a  Lemma  similar to Lemma \ref{lemma1} can still be proven for example in the case of equidistant points 
and monomials $P_j(v) = v^j, j=0, \ldots, 2N-1$.
\end{remark}

The full discrete boundary layer problem as  described in Section \ref{kinlayer} for the continuous case is then given 
in the variables  $G = (u,g)$ with $u=(g_0,g_1)$ and $g= (g_2, \ldots,g_{2N-1})$  as
\begin{equation}\label{intro}
A \partial_xG=QG
\end{equation}
with 
$$
A =\begin{pmatrix}
A_{11} & A_{12} \\
A_{21} & A_{22}
\end{pmatrix},\quad Q=\begin{pmatrix}
0 & 0 \\
0 & -I_{2N-2}
\end{pmatrix}.
$$
where
$$
A_{11}=\begin{pmatrix}
0 & \alpha_1\\
\alpha_1 & 0
\end{pmatrix},\quad
A_{12}=\begin{pmatrix}
0 & 0 & \cdots & 0\\
\alpha_2 & 0 & \cdots & 0
\end{pmatrix},\quad A_{21}=A_{12}^T.
$$

From the equation and the matching principle, we have
$$
u(x)+A_{11}^{-1}A_{12}g(x) \equiv u(\infty) := u_\infty.
$$
For the outer solution, we write in terms of the characteristic variables 
$$
u_\infty = \begin{pmatrix}
g_0\\
g_1
\end{pmatrix} (\infty)= R_1^+ \beta_+ + R_1^- \beta_-,
$$
where $R_1^+$ and $R_1^-$ are eigenvectors associated with positive and negative eigenvalues of $A_{11}$ and $\beta_{\mp} \in \R$. Specifically, we take $R_1^+=(1,1)^T$ and $R_1^-=(1,-1)^T$. It follows that $g_0(\infty)=\beta_++\beta_-$ and $g_1(\infty)=\beta_+-\beta_-$.
Note that for the original characteristic variables we have 
 $$\begin{pmatrix}
r+\\
r_-
\end{pmatrix} = \begin{pmatrix}
q_\infty+a \rho_\infty\\
q_\infty-a \rho_\infty
\end{pmatrix} = 2 \sqrt{2} \alpha_1 \begin{pmatrix}
\beta +\\
- \beta_-
\end{pmatrix} .$$


According to Lemma  \ref{lemma1} 
problem (\ref{dischalf})  is a linear dynamical system with fixed point $g=0$ and 
an associated stable manifold spanned by the eigenvectors  associated to the positive eigenvalues of $A$. 
To obtain a bounded solution of the discrete kinetic half space problem  the initial values at $x=0$, i.e. 
$g(0) = (g_2(0), g_3(0),\ldots , g_{2N-1}(0))^T$  have to be  located  in  this   manifold  spanned by the eigenvectors.  That means $g$ has  to fulfill
\begin{align*}
g(0) =\gamma_1 r_1 + \cdots + \gamma_{N-1} r_{N-1} = R_2^+ \gamma
\end{align*}
for $\gamma =(\gamma_1, \ldots,\gamma_{N-1})^T$ with some real values $\gamma_1, \ldots, \gamma_{N-1}$. 
 Using these considerations, we have 
\begin{align}
\label{U0}
G(0,t)=
R_\infty
\begin{pmatrix}
\beta_+\\[1mm]
\gamma 
\end{pmatrix}(0,t) + \beta_-  \begin{pmatrix}
R_1^-\\[1mm]
0
\end{pmatrix}\quad \text{with} \quad R_{\infty}=\begin{pmatrix}
R_1^+ & -A_{11}^{-1}A_{12}R_2^+\\[1mm]
0 & R_2^+ 
\end{pmatrix}.
\end{align}
For the boundary layer equation \eqref{intro} in moment variables with general  boundary condition
$$
B G(0,t)=b(t)
$$
  with $b(t)$ given and $B \in \R^{2N\times 2N}$,
solvability means that $\beta_+$ and $\gamma$ can be uniquely determined from the boundary condition for given $\beta_-$. Namely, the matrix $BR_{\infty}$ is invertible. Note for later use that 
the expression for $G(0,t)$ and $f(0,t)$  can be rewritten as
\begin{align}
\label{f0}
G (0,t) 
=	 T \begin{pmatrix}
D,C,\gamma
\end{pmatrix}^T,\;\; f(0,t)=S^{-1}  T \begin{pmatrix}
D,C,\gamma
\end{pmatrix}^T
\end{align}
with $T \in \R^{2N \times (N+1)}$ given by 
\begin{align*}
T=	\begin{pmatrix}
T_{11} & T_{12}\\[1mm]
0 & R_2^+ 
\end{pmatrix}
\end{align*}
with
\begin{align*}
T_{11}= \frac{1}{\sqrt{2}} \begin{pmatrix}
1&0  \\
0&\frac{1}{\alpha_1 }
\end{pmatrix}, \quad
T_{12}=	- \frac{\alpha_2}{\alpha_1}\begin{pmatrix}
e_1^T R_2^+ \\
0
\end{pmatrix}	,	
\end{align*}
where $e_1^T =(1,0,\ldots,0)$ is the unit vector in $\R^{2(N-1)}$.
$T$ is full rank, due to the linear independence of the eigenvectors. 

\section{The coupled half-space problems}
\label{coup}
The above discussion is now used  together with the discrete version of the kinetic coupling conditions (\ref{kincoup})
\begin{align}
	f^i_k(0) = \sum_{j=1}^n \beta_{ij}f^j_k (0),  i=1, \ldots, n, \; k=1, \ldots, N
\end{align}
to find the macroscopic coupling conditions at the  nodes.
In general, using the above expression (\ref{U0}) for $G^i(0,t)$ and $ S f^i(0,t)=  G^i(0,t)$ in the kinetic coupling conditions gives $n N$ equations for $nN$ unknowns $\beta_+^i,\gamma^i$ 
assuming $\beta_-^i$ is known.
Equivalently, using  (\ref{f0}) 
gives $n N$ equations for $n(N+1)$ unknowns $D^i,C^i,\gamma^i$.
%
The remaining $n$ equations are in this case  obtained from 
%
%
%
%
\begin{align}
\label{charac}
C^i - a D^i = q^i_\infty - a \rho^i_\infty = q^i_0(0) - a \rho^i_0(0) , i=1, \ldots ,n.
\end{align}

For further analytical and numerical results, we simplify the situation to the case of symmetric coupling conditions.
In case of fully symmetric coupling conditions with $\beta_{ij} = \frac{1}{n-1}, i \neq 0$  and $\beta_{ii} =0$ 
the complexity can be strongly reduced. 
Note first that the coupling conditions 
\begin{align*}
f^i(0,v) = \frac{1}{n-1}\sum_{l=1, l\ne i }^n f^l (0,-v), v >0\ , i=1, \ldots, n
\end{align*}
give for $v>0$ and $i\ne j$
\begin{align*}
(n-1) f^i(0,v) &= \sum_{l=1, l\ne i }^n f^l (0,-v)\\
&=\sum_{l=1, l\ne j }^n f^j (0,-v)+f^j(0,-v) - f^i(0,-v)\\
&= (n-1) f^j (0,v)+f^j(0,-v) - f^i(0,-v).
\end{align*}
Thus,
$$
(n-1) f^i (0,v)+f^i(0,-v) 
$$
is a kinetic  invariant at the nodes and we obtain  for the discretized equations $N$ invariants  at the nodes
\begin{align}
\label{invariant}
Z_1&=(n-1)f_{N+1}(0)+f_N(0)\nonumber\\
Z_k&=(n-1)f_{2N-k+1}(0)+f_i(0), k=2, \ldots, N-1\\
Z_N&=(n-1)f_{2N}(0)+f_1(0). \nonumber
\end{align}

%
%

Moreover, we have obviously 
\begin{align*}
\sum_{j=1}^n f^j(0,v) = \sum_{j=1}^n  f^j (0,-v), v >0
\end{align*}
and the corresponding discrete version
\begin{align*}
\sum_{j=1}^n f_{2N-k+1}^j(0) = \sum_{j=1}^n  f^j_k (0), \;  k=1, \ldots N.
\end{align*}
Alltogether we obtain  the kinetic coupling conditions in the following form
\begin{equation}\label{coupling-DV_old}
\begin{pmatrix}
 B_1& B_1  & \cdots & \cdots &    B_1\\[2mm]
 B_2 & - B_2& 0 & \cdots & 0\\[2mm]
\vdots & \vdots & \vdots & \vdots & \vdots \\[2mm]
 B_2  & 0 & \cdots & 0 & - B_2
\end{pmatrix}
\begin{pmatrix}
f^1(0,t)\\[2mm]
f^2(0,t)\\[2mm]
\vdots\\[2mm]
f^n(0,t)
\end{pmatrix}=0,
\end{equation}
where $ B_1=  (\hat I_N,-I_N)$ and $ B_2= (\hat I_N,(n-1)I_N) $. Here $I_N$  is the unit matrix and $$\hat I_N = 
\begin{pmatrix}
0 & \cdots & 0 &  0&1\\
0 & \cdots &0&1&0\\
&&\vdots &&\\
1 & 0&0&\cdots&0
\end{pmatrix}.$$
Using then $  f^i = S^{-1} U^i , i= 1, \ldots,n $ and expression (\ref{U0}) or (\ref{f0}) one obtains the coupled half-space problem 
as a linear system for $\beta_+^i, \gamma^i $ given $\beta_-^i$. Alternatively, this gives, with the additional equations $C^i-a D^i = r_-^i$, a  linear system for $C^i,D^i,\gamma^i $.

\subsection{Well-posedness  of the coupled half-space problem}
\label{wellposed}
We consider the coupled half-space problem described above and prove

\begin{theorem}
	\label{maintheorem}
	The coupled half-space problem is uniquely solvable for    given values of  characteristics  $r_-^i, i=1, \ldots, n$ on all edges,
	where $n \ge 3$.
\end{theorem}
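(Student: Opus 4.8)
\emph{Reduction and dimension count.} The plan is to reduce the claim to showing that the \emph{homogeneous} coupled problem (all $r_-^i=0$) admits only the trivial solution, and then to pass from injectivity to bijectivity by a dimension count. Once the incoming characteristics $r_-^i$, equivalently the coefficients $\beta_-^i$, are fixed, the bounded solution on each edge is parametrized by $(\beta_+^i,\gamma^i)$ via (\ref{U0}), so the unknowns number $n+n(N-1)=nN$, while the kinetic coupling conditions (\ref{coupling-DV_old}) supply exactly $nN$ scalar equations. The assembled matrix $BR_\infty$ is therefore square, and it suffices to prove its kernel is $\{0\}$, i.e. that $r_-^i=0$ for all $i$ forces $\beta_+^i=\gamma^i=0$.

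\emph{Entropy--dissipation identity on each edge.} The engine is an energy estimate for the relaxation layer, in the spirit of the hyperbolic-relaxation results of \cite{MR1722195,BenzoniSerre}. Since $A$ in (\ref{intro}) is symmetric and $Q=\mathrm{diag}(0,0,-I_{2N-2})$ is negative semidefinite with kernel spanned by the conserved components $(g_0,g_1)$, multiplying (\ref{intro}) by $G^T$ gives $\tfrac12\partial_x(G^TAG)=-|g|^2$. Integrating over $[0,\infty)$ on edge $i$ and using $g^i(\infty)=0$ together with $G^i(\infty)^TAG^i(\infty)=u_\infty^{iT}A_{11}u_\infty^i=2\alpha_1\big((\beta_+^i)^2-(\beta_-^i)^2\big)$, I would obtain
\begin{align*}
G^i(0)^TAG^i(0)=2\alpha_1\big((\beta_+^i)^2-(\beta_-^i)^2\big)+2\int_0^\infty|g^i|^2\,dx .
\end{align*}
The left-hand side is the kinetic entropy flux $\sum_k w_k v_k (f^i_k(0))^2$, because the transform $G=Sf$ is orthogonal in the weighted inner product and turns multiplication by $v$ into $A$; by the velocity symmetry it equals $\sum_{v_k>0}w_kv_k\big[(f^i(0,v_k))^2-(f^i(0,-v_k))^2\big]$.

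\emph{Summation over edges and the role of the coupling.} The decisive step is to sum over all edges and insert the kinetic coupling (\ref{kincoup}). For fixed $v_k>0$, writing the incoming data $c(v_k)=(f^1(0,-v_k),\dots,f^n(0,-v_k))^T$, the coupling reads $f^i(0,v_k)=\sum_j\beta_{ij}f^j(0,-v_k)$, so the outgoing data is $\mathcal{B}\,c(v_k)$ with $\mathcal{B}=(\beta_{ij})$, and summation yields
\begin{align*}
\sum_{v_k>0}w_kv_k\big(\|\mathcal{B}c(v_k)\|^2-\|c(v_k)\|^2\big)=2\alpha_1\sum_{i=1}^n\big((\beta_+^i)^2-(\beta_-^i)^2\big)+2\sum_{i=1}^n\int_0^\infty|g^i|^2\,dx .
\end{align*}
For the symmetric coupling with $n\ge3$ the matrix $\mathcal{B}$ is symmetric and doubly stochastic, with simple eigenvalue $1$ (eigenvector $\mathbf{1}$) and all remaining eigenvalues equal to $-\tfrac1{n-1}$; hence $\|\mathcal{B}\|_2\le1$ and the left-hand side is $\le0$. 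Setting $r_-^i=0$, i.e. $\beta_-^i=0$, the right-hand side is $2\alpha_1\sum_i(\beta_+^i)^2+2\sum_i\int_0^\infty|g^i|^2\,dx\ge0$. The two bounds force every term to vanish: $\beta_+^i=0$ and $g^i\equiv0$, whence $\gamma^i=0$ by the full column rank of $R_2^+$ from Lemma \ref{lemma1}. Injectivity of the square system then gives unique solvability for arbitrary $r_-^i$.

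\emph{Main obstacle.} The part I would treat most carefully is the boundary bookkeeping linking the abstract flux $G^i(0)^TAG^i(0)$ to the coupling matrix: one must verify that the weighted moment transform is genuinely orthogonal, so that $G^TAG$ is the physical entropy flux, and track the velocity-symmetry signs correctly. I would also pin down exactly where $n\ge3$ enters. The estimate itself uses only $\|\mathcal{B}\|_2\le1$, which holds for every symmetric doubly stochastic $\mathcal{B}$; the hypothesis $n\ge3$ additionally supplies a spectral gap, so that the only modulus-one eigenvalue of $\mathcal{B}$ is the consensus mode, excluding the degenerate pass-through node $n=2$ where $\mathcal{B}$ is a transposition. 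Confirming that precisely this gap is what the cited relaxation theory requires for the Lopatinski/Kreiss-type condition behind the invertibility of $BR_\infty$ is the step I would scrutinize most.
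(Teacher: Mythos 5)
Your proof is essentially correct, but it follows a genuinely different route from the paper. The paper first uses the edge-space structure of the symmetric coupling to split the question into two decoupled scalar half-space problems (the ``sum'' mode with boundary operator $B_1=(I_N,-I_N)$ and the ``difference'' modes with $B_2=(I_N,(n-1)I_N)$); Problem~1 is then solved by an explicit even/odd decomposition of the moments and a Vandermonde-type invertibility argument for $S_u$, while Problem~2 is handled by invoking the Generalized Kreiss Condition of Yong \cite{MR1722195}, verified through the strictly dissipative condition $y^TAy<0$ on $\ker(B_2S^{-1})$, which is where $n\ge 3$ enters via the factor $-(n^2-2n)$. You instead run a single global entropy--dissipation identity $\tfrac12\partial_x(G^TAG)=-|g|^2$ on each edge, sum over edges, and use the $\ell^2$-contractivity of the symmetric doubly stochastic coupling matrix $\mathcal{B}$ to force the homogeneous problem to have only the trivial solution; injectivity of the square $nN\times nN$ system then gives unique solvability. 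This is in the same spirit as the paper's dissipativity check for Problem~2, but it is self-contained (no appeal to the GKC machinery), avoids the explicit Vandermonde computations entirely, and in fact covers $n=2$ as well, which the paper has to treat in a separate remark; what it does not deliver is the explicit structural information (invertibility of $S_u$, $S_e$, the even/odd splitting) that the paper later reuses. One bookkeeping correction: since $\rho=\sum_i f_i$ (the weights are absorbed into the $f_i$) and $S\,\mathrm{diag}(W,W)\,S^T=I$, one has $S^TS=\mathrm{diag}(W,W)^{-1}$ and hence $G^TAG=f^TS^TSVf=\sum_k (v_k/w_k)f_k^2$, not $\sum_k w_kv_kf_k^2$; the coefficients $v_k/w_k$ are still positive and symmetric under $v_k\mapsto -v_k$, so your sign argument and the factorization over $\pm v_k$ pairs go through unchanged.
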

\textbf{Proof of Theorem \ref{maintheorem}:}
Using an inverse  reordering  of the negative discrete velocities $v_i$ and the corresponding ordering of the $f_i, \; i=1, \ldots, 2N $, i.e. an ordering of the velocities  as 
$$ (v_N, v_{N-1},  \ldots , v_1,v_{N+1}, \ldots, v_{2N}),
$$
the above kinetic coupling conditions are written  as
\begin{equation}\label{coupling-DV}
\mathcal{B} \begin{pmatrix}
	f^1(0,t),
	f^2(0,t),
	\ldots,
	f^n(0,t)
\end{pmatrix}^T=0, \; \mathcal{B}= \begin{pmatrix}
B_1 & B_1 & \cdots & \cdots & B_1\\[2mm]
B_2 & -B_2 & 0 & \cdots & 0\\[2mm]
\vdots & \vdots & \vdots & \vdots & \vdots \\[2mm]
B_2 & 0 & \cdots & 0 & -B_2
\end{pmatrix},
\end{equation}
where in the reordered case $B_1=(I_N,-I_N)$, $B_2=(I_N,(n-1)I_N)$. 
Remark that we use in the proof  for  the reordered quantities the same notation as for the original ones.
Using then $G^j=S f^j$ with the reordered Vandermonde matrix 
	\begin{equation}\label{VS}
	S=\begin{pmatrix}
	P_0(v_N) & \cdots & P_0(v_{1}) & P_0(v_{N+1}) & \cdots & P_0(v_{2N}) \\[1mm]
	\vdots & \ddots & \vdots & \vdots & \ddots & \vdots \\[1mm]
	P_{2N-1}(v_N) & \cdots & P_{2N-1}(v_{1}) & P_{2N-1}(v_{N+1}) & \cdots & P_{2N-1}(v_{2N})
	\end{pmatrix}
	\end{equation}
the coupling condition \eqref{coupling-DV} is equivalent to 
	$$
	\mathcal{B} \begin{pmatrix}
	S^{-1} G^1(0,t),
	S^{-1} G^2(0,t),
	\ldots,
	S^{-1} G^n(0,t)
	\end{pmatrix}^T=0.
	$$

Using (\ref{U0}) and a direct computation one observes, that showing the solvability of the coupling problem
  is equivalent to checking the invertibility of $B_1S^{-1}R_\infty$ and $B_2S^{-1}R_\infty$. In other words, we need to check the solvability of the following two sub-problems:
$$
(\text{Problem}~1)~\left\{\begin{array}{l}
A\partial_xG = QG \\[3mm]
B_1S^{-1}G(0,t)=0
\end{array}\right.
\quad 
(\text{Problem}~2)~\left\{\begin{array}{l}
A\partial_xG = QG \\[3mm]
B_2S^{-1}G(0,t)=0.
\end{array}\right.
$$

\textbf{Problem 1:} 
It is not difficult to see that 
\begin{equation}
g_1(x)\equiv g_1(\infty),\qquad g_0(x)+\frac{\alpha_2}{\alpha_1}g_2(x)\equiv g_0(\infty).
\end{equation}
By introducing
$$
\bar{A}=
\begin{pmatrix}
\alpha_3 & 0 &  \cdots & 0\\
\alpha_4 & \alpha_5 & \ddots & \vdots\\
& \ddots & \ddots & 0\\
& & \alpha_{2N-2} & \alpha_{2N-1}
\end{pmatrix},\quad
g_{e} = \begin{pmatrix}
g_2\\
g_4\\
\vdots\\
g_{2N-2}
\end{pmatrix},\quad 
g_u = \begin{pmatrix}
g_3\\
g_5\\
\vdots\\
g_{2N-1}
\end{pmatrix},
$$
we rewrite the ODE for $g$ according to the even-odd partition
$$
\partial_x
\begin{pmatrix}
0 & \bar{A}\\[1mm]
\bar{A}^T & 0
\end{pmatrix}
\begin{pmatrix}
g_e\\[1mm]
g_u
\end{pmatrix}=
-\begin{pmatrix}
g_e\\[1mm]
g_u
\end{pmatrix}.
$$
It means that 
$$
\partial_x
\begin{pmatrix}
g_e\\[1mm]
g_u
\end{pmatrix}=
-\begin{pmatrix}
0 & \bar{A}^{-T}\\[1mm]
\bar{A}^{-1} & 0
\end{pmatrix}
\begin{pmatrix}
g_e\\[1mm]
g_u
\end{pmatrix}.
$$
For the coefficient matrix of this ODE system, we have, see, for example, \cite{LY23},
\begin{lemma}
	There exists an orthogonal matrix $\bar{R}$ such that $$
	\bar{R}^T\begin{pmatrix}
	0 & \bar{A}^{-T}\\[1mm]
	\bar{A}^{-1} & 0
	\end{pmatrix}\bar{R}=\begin{pmatrix}
	\Lambda_+ & 0\\[1mm]
	0 & -\Lambda_+
	\end{pmatrix},\quad 
	\bar{R}=\begin{pmatrix}
	\bar{R}_1 & \bar{R}_1\\[1mm]
	\bar{R}_2 & -\bar{R}_2
	\end{pmatrix}.
	$$
	Here $\Lambda_+$ is a diagonal matrix with positive entrances and $\bar{R}_1^T\bar{R}_1=\bar{R}_2^T\bar{R}_2=\frac{1}{2}I_{N-1}$.
\end{lemma}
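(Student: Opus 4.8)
The plan is to obtain $\bar R$ from a singular value decomposition of $\bar A$, since the block anti-diagonal matrix
$$
M:=\begin{pmatrix} 0 & \bar A^{-T}\\ \bar A^{-1} & 0\end{pmatrix}
$$
inherits its structure directly from that of $\bar A$. First I would record two preliminary observations. The matrix $\bar A$ is lower bidiagonal with strictly positive diagonal entries $\alpha_3,\alpha_5,\ldots,\alpha_{2N-1}$, so $\det\bar A=\prod_k\alpha_{2k+1}>0$ and $\bar A$ is invertible, which makes $M$ well defined. Moreover $M$ is symmetric, hence orthogonally diagonalizable with real eigenvalues; and if $(u,w)^T$ is an eigenvector for the eigenvalue $\lambda$, then $(u,-w)^T$ is an eigenvector for $-\lambda$, so the spectrum is symmetric about $0$. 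This already indicates the desired block form $\mathrm{diag}(\Lambda_+,-\Lambda_+)$ with $\Lambda_+$ positive.

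The core step is to write $\bar A=P\Sigma Q^T$ with $P,Q$ orthogonal and $\Sigma=\mathrm{diag}(\sigma_1,\ldots,\sigma_{N-1})$, $\sigma_i>0$. Then $\bar A^{-1}=Q\Sigma^{-1}P^T$ and $\bar A^{-T}=P\Sigma^{-1}Q^T$, and conjugating $M$ by $\mathrm{diag}(P,Q)$ gives
$$
\begin{pmatrix} P^T & 0\\ 0 & Q^T\end{pmatrix} M \begin{pmatrix} P & 0\\ 0 & Q\end{pmatrix}=\begin{pmatrix} 0 & \Sigma^{-1}\\ \Sigma^{-1} & 0\end{pmatrix}.
$$
A second orthogonal transformation by $V=\tfrac{1}{\sqrt 2}\begin{pmatrix} I & I\\ I & -I\end{pmatrix}$ then block-diagonalizes the right-hand side, since a direct computation gives $V^T\begin{pmatrix} 0 & \Sigma^{-1}\\ \Sigma^{-1} & 0\end{pmatrix}V=\begin{pmatrix} \Sigma^{-1} & 0\\ 0 & -\Sigma^{-1}\end{pmatrix}$.

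Combining the two transformations, I would set $\bar R=\mathrm{diag}(P,Q)\,V=\tfrac{1}{\sqrt2}\begin{pmatrix} P & P\\ Q & -Q\end{pmatrix}$, which is orthogonal as a product of orthogonal matrices, and read off $\bar R_1=\tfrac{1}{\sqrt2}P$, $\bar R_2=\tfrac{1}{\sqrt2}Q$ and $\Lambda_+=\Sigma^{-1}$. The positivity of the entries of $\Lambda_+$ follows from $\sigma_i>0$, and $\bar R_1^T\bar R_1=\tfrac12 P^TP=\tfrac12 I_{N-1}$, $\bar R_2^T\bar R_2=\tfrac12 Q^TQ=\tfrac12 I_{N-1}$, as required. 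I do not expect a serious obstacle here: once the SVD is invoked the computation is routine. The only point requiring care is that the repeated-column block structure $\begin{pmatrix}\bar R_1 & \bar R_1\\ \bar R_2 & -\bar R_2\end{pmatrix}$ is not an extra hypothesis to be arranged but an automatic consequence of choosing the second factor $V$, which mixes the two halves symmetrically; one should also note that any ordering of the singular values is admissible, so no reordering of columns is needed.
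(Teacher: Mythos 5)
Your proof is correct. Note that the paper does not actually prove this lemma --- it simply refers to the literature (``see, for example, \cite{LY23}''), so your SVD argument supplies a self-contained justification that the paper omits. The key steps all check out: $\bar A$ is lower bidiagonal with positive diagonal entries $\alpha_3,\alpha_5,\dots,\alpha_{2N-1}$, hence invertible; with $\bar A=P\Sigma Q^T$ one indeed gets $P^T\bar A^{-T}Q=Q^T\bar A^{-1}P=\Sigma^{-1}$, so conjugation by $\mathrm{diag}(P,Q)$ reduces the matrix to $\left(\begin{smallmatrix}0&\Sigma^{-1}\\ \Sigma^{-1}&0\end{smallmatrix}\right)$, and the final conjugation by $V=\tfrac{1}{\sqrt2}\left(\begin{smallmatrix}I&I\\ I&-I\end{smallmatrix}\right)$ yields $\mathrm{diag}(\Sigma^{-1},-\Sigma^{-1})$. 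The resulting $\bar R=\tfrac{1}{\sqrt2}\left(\begin{smallmatrix}P&P\\ Q&-Q\end{smallmatrix}\right)$ has exactly the required block structure with $\bar R_1^T\bar R_1=\bar R_2^T\bar R_2=\tfrac12 I_{N-1}$, and $\Lambda_+=\Sigma^{-1}$ is positive. Your preliminary remark about the spectral symmetry $(u,w)\mapsto(u,-w)$ is a nice sanity check but is not logically needed once the SVD construction is carried out; and you are right that no ordering of the $\sigma_i$ is required, since the lemma only asserts positivity of the entries of $\Lambda_+$, not monotonicity. What the SVD route buys, compared with invoking the cited reference, is a short, elementary, and fully explicit construction of $\bar R_1$, $\bar R_2$ and $\Lambda_+$ as $\tfrac{1}{\sqrt2}P$, $\tfrac{1}{\sqrt2}Q$ and $\Sigma^{-1}$.
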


\noindent Due to this, we write 
$$
\begin{pmatrix}
g_e\\[1mm]
g_u
\end{pmatrix}(0)=
\begin{pmatrix}
\bar{R}_1 \\[1mm]
\bar{R}_2 
\end{pmatrix}\gamma.
$$

\begin{lemma}\label{S}
For the reordered Vandermonde like matrix $S$ defined by \eqref{VS}, we have 
		$$
		S^{-1}=\begin{pmatrix}
		W & \\
		& W
		\end{pmatrix}S^T.
		$$
		Here $W$ is an $N\times N$ diagonal matrix with positive entrances..
\end{lemma}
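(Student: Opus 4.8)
The plan is to read the assertion as the matrix form of the discrete orthogonality relation $\sum_{i=1}^{2N} w_i P_j(v_i)P_k(v_i)=\delta_{jk}$ enjoyed by the normalized Legendre polynomials at the Gauss--Legendre nodes. First I would let $\tilde v_1,\dots,\tilde v_{2N}$ denote the reordered velocities $(v_N,v_{N-1},\dots,v_1,v_{N+1},\dots,v_{2N})$ that label the columns of $S$, and let $\tilde w_k$ be the Gauss weight attached to $\tilde v_k$. With this convention the $(j,k)$ entry of $S$ is $P_{j-1}(\tilde v_k)$, and the orthogonality relation reads exactly $\sum_{k} S_{jk}\,\tilde w_k\, S_{lk}=\delta_{jl}$, that is $S\,\mathrm{diag}(\tilde w_1,\dots,\tilde w_{2N})\,S^T=I_{2N}$. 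Since $S$ is a nonsingular Vandermonde-type matrix, this at once gives $S^{-1}=\mathrm{diag}(\tilde w_1,\dots,\tilde w_{2N})\,S^T$.

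It then remains to identify the reordered weight matrix $\mathrm{diag}(\tilde w_1,\dots,\tilde w_{2N})$ with the block matrix $\begin{pmatrix} W & \\ & W\end{pmatrix}$, where $W=\mathrm{diag}(\tilde w_1,\dots,\tilde w_N)$. This is precisely where the symmetry assumptions enter. By the reordering one has $\tilde v_m = v_{N+1-m}$ and $\tilde v_{N+m}=v_{N+m}$ for $m=1,\dots,N$, and the node symmetry gives $v_{N+m}=-v_{N+1-m}$. Since the Gauss--Legendre weights are symmetric with respect to $v\mapsto -v$, the weight of $\tilde v_{N+m}=v_{N+m}$ equals the weight of $v_{N+1-m}=\tilde v_m$, so $\tilde w_{N+m}=\tilde w_m$ for every $m$; hence the second diagonal block coincides with the first. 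Finally, all Gauss--Legendre weights are strictly positive, so $W$ has positive diagonal entries, which completes the claim.

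I do not expect a genuine obstacle here: the whole content is the recognition of the discrete orthogonality relation as the factorization $S\,\tilde W\,S^T=I$, after which the stated form of $S^{-1}$ is automatic and the block structure of $\tilde W$ is pure bookkeeping of the reordering combined with the node/weight symmetry. The only point requiring care is to keep the permutation of columns consistent with the permutation of weights, so that the diagonal weight matrix is ordered to match the columns of the reordered $S$ rather than the original velocity ordering.
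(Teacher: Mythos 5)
Your proof is correct and follows essentially the same route as the paper: both rest on the discrete orthogonality relation $\sum_i w_i P_j(v_i)P_k(v_i)=\delta_{jk}$ written as $S\,\mathrm{diag}(\tilde w)\,S^T=I_{2N}$, combined with the symmetry $w_k=w_{2N+1-k}$ of the Gauss weights to identify the reordered weight matrix with $\mathrm{diag}(W,W)$. The only cosmetic difference is that the paper folds the reordering and the weight symmetry into a single summation identity, whereas you track the permuted weights $\tilde w_k$ explicitly; the content is identical.
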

\begin{proof}
	For the Gaussian–Legendre nodes  $v_1,\cdots,v_N,v_{N+1},\cdots,v_{2N}$, we take the symmetric Gaussian quadrature weights $w_1,w_2,...,w_N,w_{N+1}, \ldots, w_{2N}$  with $w_1=w_{2N},...,w_N=w_{N+1}$ and compute 
	\begin{align*}
	\sum_{k=1}^{N}w_{N+k}[P_i(v_{N-k+1})P_j(v_{N-k+1})+P_i(v_{N+k})P_j(v_{N+k})] 
	&=\delta_{ij}.
	\end{align*}
	Due to the above relation, we see that
	$$
	S\begin{pmatrix}
	W & \\
	& W
	\end{pmatrix}S^T = I_{2N}
	$$
	with $W=\text{diag}(w_{N+1},w_{N+2},...,w_{2N})$. This completes the proof of the lemma.
\end{proof}
Thanks to this lemma, we have with $B_1S^{-1} = W(I_N,-I_N)S^{T} $
$$
B_1S^{-1} =  -2W\begin{pmatrix}
0 & P_1(v_{N+1}) & 0 & P_3(v_{N+1}) & \cdots & 0 & P_{2N-1}(v_{N+1}) \\[1mm]
\vdots & \vdots & \vdots & \vdots & \vdots & \vdots &\vdots\\[1mm]
0 & P_1(v_{2N}) & 0 & P_3(v_{2N}) & \cdots & 0 & P_{2N-1}(v_{2N})
\end{pmatrix}.
$$
Note that we have used the relation $P_k(-v)=P_k(v)$ for even number $k$ and $P_k(-v)=-P_k(-v)$ for odd $k$. Then the boundary condition in Problem 1 becomes
\begin{equation}\label{BC1}
-2WS_u
\begin{pmatrix}
g_1\\
g_u
\end{pmatrix}(0)
=0
\end{equation}
with
$$
S_u=
\begin{pmatrix}
P_1(v_{N+1}) & P_3(v_{N+1}) & \cdots & P_{2N-1}(v_{N+1}) \\[1mm]
\vdots & \vdots & \vdots& \vdots \\[1mm]
P_1(v_{2N}) & P_3(v_{2N}) & \cdots & P_{2N-1}(v_{2N})
\end{pmatrix}.
$$
\begin{lemma}
	The matrix $S_u$ is invertible. 
\end{lemma}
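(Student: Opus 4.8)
The plan is to prove invertibility of the square matrix $S_u \in \R^{N \times N}$ by showing that its $N$ columns are linearly independent. The $k$-th column consists of the values $P_{2k-1}(v_{N+1}), \ldots, P_{2k-1}(v_{2N})$, i.e. the odd-degree orthonormal Legendre polynomials $P_1, P_3, \ldots, P_{2N-1}$ evaluated at the $N$ positive Gauss--Legendre nodes $v_{N+1}, \ldots, v_{2N}$. First I would record the two structural facts that carry the argument: each $P_{2k-1}$ is an odd function, $P_{2k-1}(-v) = -P_{2k-1}(v)$ (the Legendre polynomial $P_m$ has parity $(-1)^m$, inherited by the normalized version), and the family $\{P_1, P_3, \ldots, P_{2N-1}\}$ is linearly independent because its members have pairwise distinct degrees $1, 3, \ldots, 2N-1$.

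The core step is then a root-counting argument. Suppose a column relation $\sum_{k=1}^{N} c_k P_{2k-1}(v_{N+i}) = 0$ holds for every $i = 1, \ldots, N$, and set $p(v) = \sum_{k=1}^{N} c_k P_{2k-1}(v)$. By construction $p$ is an odd polynomial of degree at most $2N-1$ vanishing at the $N$ positive nodes $v_{N+1}, \ldots, v_{2N}$. By oddness, $p$ also vanishes at the corresponding negative nodes $-v_{N+i} = v_{N-i+1}$ (these are exactly the negative Gauss points, by the symmetry assumption $v_{N+1} = -v_N, \ldots, v_{2N} = -v_1$) and at $v = 0$. This exhibits $2N+1$ distinct roots of a polynomial of degree at most $2N-1$, forcing $p \equiv 0$. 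Linear independence of the $P_{2k-1}$ then yields $c_1 = \cdots = c_N = 0$, so the columns of $S_u$ are independent and $S_u$ is invertible.

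I expect the only point requiring genuine care to be the root count, specifically the claim that the $2N+1$ points involved are truly distinct. This is where the properties of Gauss--Legendre nodes enter: the positive nodes are distinct and strictly positive, so their negatives are distinct from one another, distinct from the positive nodes, and distinct from $0$. No deeper property of the nodes is needed, which makes this the ``obstacle'' essentially a bookkeeping check rather than a substantial difficulty.

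As a cross-check, an equivalent route avoids parity counting: writing $P_{2k-1}(v) = v\, Q_{k-1}(v^2)$ with $\deg Q_{k-1} = k-1$, one can factor $S_u = \mathrm{diag}(v_{N+1}, \ldots, v_{2N})\, M$, where $M_{ik} = Q_{k-1}(v_{N+i}^2)$. Since $\{Q_0, \ldots, Q_{N-1}\}$ is a basis of polynomials of degree at most $N-1$, the matrix $M$ equals a standard Vandermonde matrix in the squared nodes $v_{N+i}^2$ times a triangular change-of-basis matrix with nonzero diagonal; as the squared nodes are distinct and the diagonal factor $\mathrm{diag}(v_{N+i})$ has nonzero entries, invertibility of $S_u$ follows again. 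I would present the root-counting proof as the main argument and mention this Vandermonde reduction as an alternative.
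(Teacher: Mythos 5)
Your proof is correct, but your main argument takes a different route from the paper's. The paper argues structurally: using the three-term recursion $vP_k=\alpha_{k+1}P_{k+1}+\alpha_kP_{k-1}$ and the fact that the positive nodes are nonzero, it reduces invertibility of $S_u$ to that of the matrix $S_e$ built from the even polynomials $P_0,P_2,\ldots,P_{2N-2}$, then uses $P_{2k}(v)=\widehat{P}_k(v^2)$ to reduce to a standard Vandermonde matrix in the squared nodes. This is essentially your ``cross-check'' factorization $S_u=\mathrm{diag}(v_{N+i})\,M$ with $M$ a Vandermonde times a triangular change of basis, so your alternative route coincides with the paper's proof. Your primary argument --- root counting for the odd polynomial $p(v)=\sum_k c_kP_{2k-1}(v)$ of degree at most $2N-1$, which by parity acquires $2N+1$ distinct zeros (the positive nodes, their negatives, and the origin) and must therefore vanish identically --- is a genuinely different and arguably more elementary proof; it needs only that the positive nodes are distinct and nonzero, exactly as you note. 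What the paper's reduction buys in exchange is that it establishes invertibility of $S_e$ along the way, a fact that is explicitly reused later in the remark treating the case $n=2$; if you adopted the root-counting proof as the main argument there, you would need a separate (though entirely analogous) root count for the even-polynomial matrix $S_e$.
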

\begin{proof}
	According to the recursion relation $vP_k=\alpha_{k+1}P_{k+1}+\alpha_kP_{k-1}$ and the fact $v_k\neq 0~(1\leq k\leq N)$, it suffices to check the invertibility of the matrix 
	$$
	\begin{pmatrix}
	1 & P_2(v_{N+1}) & \cdots & P_{2N-2}(v_{N+1}) \\[1mm]
	\vdots & \vdots & \vdots & \vdots\\[1mm]
	1 & P_2(v_{2N}) & \cdots & P_{2N-2}(v_{2N})
	\end{pmatrix}.
	$$
	Thanks to the property of the even-order Legendre polynomial, we know that $P_{2k}(v)=\widehat{P}_k(v^2)$ with $\widehat{P}_k$ a $k$-th order polynomial. Then it suffices to check the invertibility of 
	$$
	\begin{pmatrix}
	1 & v_{N+1}^2 & \cdots & (v_{N+1}^2)^{2N-2} \\[1mm]
	\vdots & \vdots & \vdots & \vdots\\[1mm]
	1 & v_{2N}^2 & \cdots & (v_{2N}^2)^{2N-2}
	\end{pmatrix}.
	$$
	According to the property of standard Vandermonde matrix, we know that the last matrix is invertible and this completes the proof
	of  the lemma.
	\end{proof} 
Recall that $g_1(0)\equiv g_1(\infty) = \beta_+-\beta_-$ and $g_u(0)=\bar{R}_2\gamma$. Then the relation \eqref{BC1} becomes 
\begin{equation*}
-2WS_u
\begin{pmatrix}
1 & \\
& \bar{R}_2
\end{pmatrix}
\begin{pmatrix}
\beta_+\\
\gamma
\end{pmatrix}
=-2WS_u
\begin{pmatrix}
\beta_-\\
0
\end{pmatrix}.
\end{equation*}
The last equation is solvable since $W$, $S_u$ and $\bar{R}_2$ are all invertible, which gives the solvability of Problem 1.\\[1mm]

\textbf{Problem 2:}
To check the solvability of Problem 2, we recall the result in \cite{MR1722195} which gives: (1) $B_2S^{-1}R_{\infty}$ is invertible if the matrix $B_2S^{-1}$ satisfies the so-called generalized Kreiss condition (GKC) proposed therein. (2) the matrix $B_2S^{-1}$ satisfies the GKC, if it satisfies the following strictly dissipative condition \cite{BenzoniSerre}:
	$$
	y^TAy<0,\qquad \text{for any}~~ y\in \text{ker} (B_2S^{-1}).
	$$
	Therefore, it suffices to check that the above strictly dissipative condition holds.

To this end, we express the kernel of $B_2S^{-1}$ as
$$
y=S\begin{pmatrix}
(n-1)I_N\\[1mm]
-I_N
\end{pmatrix}x,\quad x\in\mathbb{R}^N \setminus\{0\} .
$$
Then we compute 
$$
y^TAy = x^T\begin{pmatrix}
(n-1)I_N & -I_N
\end{pmatrix}S^TAS\begin{pmatrix}
(n-1)I_N\\[1mm]
-I_N
\end{pmatrix}x.
$$
Using Lemma \ref{S}, we have
$$
S^TAS=\begin{pmatrix}
W^{-1} & \\
& W^{-1}
\end{pmatrix}(S^{-1}AS)=\begin{pmatrix}
W^{-1} & \\
& W^{-1}
\end{pmatrix}\begin{pmatrix}
-V & \\
& V
\end{pmatrix},
$$
where $V=\text{diag}(v_{N+1},v_{N+2},...,v_{2N})$.
Thus we obtain
$$
y^TAy = -(n^2-2n)x^TW^{-1}Vx .
$$ 
Recall that $W$ and $V$ are diagonal matrices with positive entrances. Consequently, we find that $y^TAy<0$ for any $n\ge 3 $,
which gives the solvability of Problem 2 and finishes the proof of Theorem \ref{maintheorem}.

\begin{remark}
In the case $n=2$ solvability is proven as follows. 
Actually, in proving the solvability of Problem 2, the last argument 
$$
y^TAy=-(n^2-2n)x^TW^{-1}Vx<0
$$ is not true for $n=2$. 
We proceed instead as follows. We  compute $B_2S^{-1}=W(I_N,I_N)S^T$ as 
$$
B_2S^{-1}=2W\begin{pmatrix}
P_0(v_{N+1}) & 0 & P_2(v_{N+1}) & \cdots  & P_{2N-2}(v_{N+1}) & 0\\[1mm]
\vdots & \vdots & \vdots & \vdots & \vdots & \vdots \\[1mm]
P_0(v_{2N}) & 0 & P_2(v_{2N}) & \cdots &  P_{2N-2}(v_{2N})& 0
\end{pmatrix}.
$$
Then the boundary condition in Problem 2 becomes 
\begin{equation}
2WS_e\begin{pmatrix}
g_0\\
g_e
\end{pmatrix}(0)=0
\end{equation}
with 
$$
S_e=\begin{pmatrix}
P_0(v_{N+1}) & P_2(v_{N+1}) & \cdots  & P_{2N-2}(v_{N+1})\\[1mm]
\vdots & \vdots & \vdots & \vdots \\[1mm]
P_0(v_{2N})& P_2(v_{2N}) & \cdots &  P_{2N-2}(v_{2N})
\end{pmatrix}.
$$
In the proof of Lemma 6.4, we have shown that $S_e$ is invertible. Recall that $g_0(0)+\frac{\alpha_2}{\alpha_1}g_2(0)=g_0(\infty)=\beta_++\beta_-$. The boundary condition can be written as
$$
2WS_e\begin{pmatrix}
1 & -\frac{\alpha_2}{\alpha_1}e_1^T\\[2mm]
0 & I_{N-1}
\end{pmatrix}
\begin{pmatrix}
\beta_++\beta_-\\[2mm]
g_e(0)
\end{pmatrix}=0.
$$
Moreover, we use the relation $g_e(0)=\bar{R}_1\gamma$ to conclude
$$
2WS_e 
\begin{pmatrix}
1 & -\frac{\alpha_2}{\alpha_1}e_1^T\bar{R}_1\\[2mm]
0 & \bar{R}_1
\end{pmatrix}
\begin{pmatrix}
\beta_+ \\[1mm]
\gamma
\end{pmatrix} = -2WS_e \begin{pmatrix}
\beta_- \\[1mm]
0
\end{pmatrix}.
$$
The last equation is solvable since $W$, $S_e$ and $\bar{R}_1$ are invertible.
	
\end{remark}

\subsection{Numerical solution of the coupling problem}
\label{numcoupling}
Numerically, we proceed as follows. We aim at obtaining  directly  the constant $\delta$.
This is then used in  the macroscopic coupling conditions (\ref{macrocond}).
Thus we avoid solving the layer problem for each node.
Note that, from now on,  again the original ordering of the velocities is considered.

Reconsidering the invariants  (\ref{invariant}),  we have with $Z=(Z_1,Z_2,\ldots, Z_N)^T$ and (\ref{f0}) the relation 
\begin{align*}
Z= 	 B_2  S^{-1} T	
\begin{pmatrix}
D,C,\gamma
\end{pmatrix}^T			
\end{align*}
with $ B_2= (\hat I_N, (n-1)I_N) \in \R^{N\times 2N}$ as before.
Then, Gaussian elimination or a $QR$ decomposition  
transforms  $ B_2 S^{-1} T$ to the form

\begin{align*}
	\begin{pmatrix}
		1 & \delta & 0 &0&&\cdots& 0 \\
		0&1 &\delta_1&0&&\cdots&0\\
		&&\vdots&&\\
		0&\cdots &&\cdots&0&1&\delta_{N-1}
	\end{pmatrix}		\;.
\end{align*}
 In particular, we obtain directly the invariant
\begin{align*}
D +  \delta C.
\end{align*}
As discussed above,
this gives  $n-1$  equations at each  node. Together with the  balance of fluxes, which yields
$$
\sum_{i=1}^n C_i =0
$$
 we have therefore $n$
coupling conditions as required.
Additionally, we obtain  $n$ more conditions from the outgoing characteristics, i.e. equations (\ref{charac}), as before.
This gives altogether again $2n$ equations for the $2n$ unknown quantities $C^i$ and $D^i$ at each node and 
the system of macroscopic coupling conditions  (\ref{macrocond}).

\begin{remark}
For the numerical investigation and the results of the Gaussian elimination, see Section \ref{numerical1}.
\end{remark}

\begin{remark}
For a more general choice of discretization points the above  computations  can be performed in a similar way.
However, from a numerical point of view such a general choice of
points $v_i$ poses several problems. First the numerical determination
of the  eigenvectors is not as simple and efficient any more, since
the matrix $A$ is not symmetric. Second, and more important, a general
choice of discretization points (e.g., equidistant $v_i$) has the
effect that
the Vandermonde-like matrix $S$ is severely ill-conditioned  for large
$N$, see, for example,
\cite{Gautschi}.  . This results in a limited accuracy for the numerical
determination of the coupling conditions.
\end{remark}

\subsection{The   kinetic solution at  the node}
\label{nearnode}

To obtain the full  kinetic solution at the node in the limit $\epsilon \rightarrow 0 $  we have to 
determine the solution of the kinetic fixed-point problem at $x=0$. That means according to (\ref{f0}) we have to determine the values of $\gamma^i_1, \ldots, \gamma^i_{N-1}$ for each edge $i=1, \ldots, n$. That gives finally all moments of the distribution function on each edge at the node.
In particular, we obtain $\rho^i (x=0,t)$.
In case of fully symmetric coupling conditions we can simplify the procedure.
Using the above transformation of    the matrix $ B_2S^{-1} T$ we  obtain for each edge the additional $N-1$ invariants
\begin{align}
\label{invnext}
\begin{aligned}
C+ \delta_1 \gamma_1, \\
\gamma_{k-1} + \delta_{k} \gamma_{k}, \; k=2, \ldots, N-1 .
\end{aligned}
\end{align}
Moreover, we obtain directly from the coupling conditions for the odd moments
\begin{align*}
\sum_{i=1}^n g^i_{2k+1}(x=0) =0, k= 1, \ldots, N-1,
\end{align*}
which leads to 
\begin{align}
\label{invnext2}
\sum_{i=1}^n e^T_{2k} R_2^+\gamma^i =0, k= 1, \ldots, N-1 .
\end{align}
(\ref{invnext}) and (\ref{invnext2})  give the required $(N-1)(n-1)+N-1= (N-1)n$ conditions additionally to the $2n$ conditions from above and therefore 
$C^i,D^i, \gamma_1^i, \ldots,\gamma^i_{N-1}$ and thus all moments $\rho^i,q^i,g_2^i, \ldots, g_{2N-1}^i$ at $x=0$.
In particular,  $\rho^i(x=0)$ is given by 
\begin{align}
\label{rho0}
\rho^i(x=0) = 
D^i- \frac{\alpha_2 \sqrt{2}}{\alpha_1} e_1^T R_2^+ \gamma^i.
\end{align}

\subsection{Approximate  coupling conditions}
\label{halfspacemarshak}

For numerical comparison we state here the result of two approximate methods to determine the above invariant and the coefficient $\delta$, see
\cite{BK18}  for details. For further approximation methods for linear half-space problems, see \cite{M47,LF2,GK95,ST}.
Equalizing  positive half-fluxes on each edge gives 
\begin{align}
\delta = \frac{2(n-2)}{n}\;.
\end{align}
For $n=3$, we obtain 
$\delta = \frac{2}{3}$ while letting 
$n \rightarrow \infty$ gives $\delta = 2$. 
	The approach via half moment approximations of the kinetic problem from \cite{BK18} leads to 
	$$
	\delta = \frac{n-2}{n} \frac{  \frac{9}{\sqrt{3}}+4 \frac{n-2}{n}}{\frac{4}{\sqrt{3}}+2 \frac{n-2}{n}   }.
	$$
	 Here $n=3$ gives   $\delta =  \frac{1}{3} \frac{  \frac{9}{\sqrt{3}}+\frac{4}{3}}{\frac{4}{\sqrt{3}}+ \frac{2}{3}   } \sim 0.731 $
	and 
	$n=\infty$  gives $\delta = \frac{  \frac{9}{\sqrt{3}}+4 }{\frac{4}{\sqrt{3}}+2 \ } \sim 2.134$.
%

\subsection{Numerical results}
\label{numerical1}
We restrict ourselves to fully symmetric coupling conditions.
From a numerical point of view the computation of $\delta$ is independent from the solution of the network problem.
It requires in particular the knowledge of the positive eigenvalues $\lambda_i, i= 1 , \ldots , N-1$ of the matrix $A_{22}$.
Moreover, an inversion of the Vandermonde like matrix $S$  is needed and one Gaussian elimination
of $ B_2S^{-1} T$.
The matrix $S$ is well-conditioned, as long as the Gauß-Legendre points are used, see, e.g. \cite{Gautschi}.
Results are shown in Fig. \ref{fig2} (left) for the case $n=3$ and Fig. \ref{fig2} (right) for the case of infinitely many edges.
Comparing the results for large $N$ with the approximate methods in the previous section shows, in particular, the very good approximation quality of 
the half-moment approximative  method described  in detail in \cite{BK18}. 

As mentioned before, using a discrete velocity model with
equidistributed velocity discretization the Vandermonde-like matrix
$S$ tends to be severely  ill-conditioned, see, for example,
\cite{Gautschi}.  
For smaller $N$ the value of $\delta$ is approximated in this case still  in a reasonable way, however,  the solution displays oscillations for $N>20$. Note that such a behaviour  is well understood, since the condition number of the Vandermonde matrix  $S$ with the above choice of polynomials and discretization points grows exponentially with $N$ and reaches values of  order $10^{20}$ 
for $N=20$, \cite{Gautschi}.

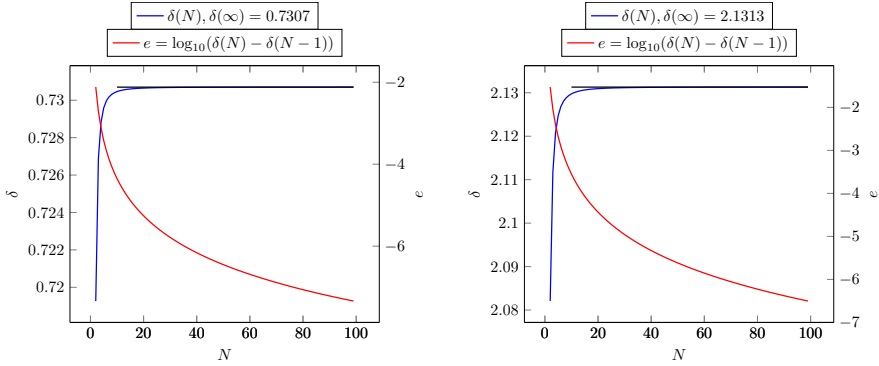
\begin{figure}[h!]
\center
	\externaltikz{funcr1111}{
	\begin{tikzpicture}[scale=0.6]

\begin{axis}[ ylabel = $\delta$, ylabel near ticks, 
xlabel =  $N$, xlabel near ticks,
legend style = {at={(0.5,1)},xshift=0.0cm,yshift=0.8cm,anchor=south},
legend columns= 1,
yticklabel style={/pgf/number format/.cd,fixed,precision=3},
axis y line*=left,
]
\addplot[color = blue, thick] file{datalegendre/datapaper/delta_legendre3.txt};
\addplot[black,thick] coordinates {(10,0.7307)  (99,0.7307)};	
\addlegendentry{$\delta(N), \delta(\infty)= 0.7307$}						
\end{axis}
\begin{axis}[ylabel =$e$,ylabel near ticks, 
legend style = {at={(0.5,1)},xshift=0.0cm,yshift=0.2cm,anchor=south},
legend columns= 1,
axis y line*=right,
]

\addplot[color = red,thick] file{datalegendre/datapaper/error3.txt};
\addlegendentry{$e=\log_{10}(\delta(N) - \delta(N-1))$}

\end{axis}
\end{tikzpicture}
}
	\externaltikz{funcr1yy111}{
	\begin{tikzpicture}[scale=0.6]

\begin{axis}[ ylabel = $\delta$, ylabel near ticks, 
xlabel =  $N$, xlabel near ticks,
legend style = {at={(0.5,1)},xshift=0.0cm,yshift=0.8cm,anchor=south},
legend columns= 1,
yticklabel style={/pgf/number format/.cd,fixed,precision=3},
axis y line*=left,
]
\addplot[color = blue, thick] file{datalegendre/datapaper/delta_legendre.txt};
\addplot[black,thick] coordinates {(10,2.1313)  (99,2.1313)};	
\addlegendentry{$\delta(N), \delta(\infty)= 2.1313$}						
\end{axis}
\begin{axis}[ylabel =$e$,ylabel near ticks, 
legend style = {at={(0.5,1)},xshift=0.0cm,yshift=0.2cm,anchor=south},
legend columns= 1,
axis y line*=right,
]

\addplot[color = red,thick] file{datalegendre/datapaper/error.txt};
\addlegendentry{$e=\log_{10}(\delta(N) - \delta(N-1))$}

\end{axis}
\end{tikzpicture}
}
	\caption{Coefficient $\delta  $ depending on $N$ for $n=3$
          (left) and $n=\infty$ (right) using Gauss-Legendre
          polynomials and points. Associated increment depending on $N$. The black line denotes the limit value $\delta  (\infty)$ of $\delta  (N)$. }
	\label{fig2}
\end{figure}

\section{A kinetic model with unbounded velocity space}
\label{unbound}
This section considers the case of a kinetic equation with  unbounded velocity space.
\subsection{Equations and coupling conditions}
\label{sec:equationsbgk}

For  $f=f(x,v,t)$ with $x\in\R$ and  $v\in\R$ at time $t\in[0,T]$ we consider the following BGK-type  model with a hyperbolic space-time scaling 
\begin{equation}\label{eq:2.1bgk}
	\partial_t f +v\partial_xf=\frac{1}{\epsilon}Q(f)= -\frac{1}{\epsilon}\left(f-\left(\rho+vq\right)M(v)\right),
\end{equation} 
where density, mean flux and total energy are given by 
$$
\rho=\int_{-\infty}^{\infty}f(v)dv, \quad q=\int_{-\infty}^{\infty}vf(v)dv
$$
and the standard Maxwellian is defined by
$$
M(v)=\frac{1}{\sqrt{2\pi}}\exp(-\frac{v^2}{2}).
$$
The associated limit equation for $\epsilon\to 0$ is the wave equation  
\begin{equation}
	\begin{aligned}
		\partial_t\rho_0+\partial_xq_0&=0, \\
		\partial_tq_0+ \partial_x\rho_0&=0.
	\end{aligned}
	\label{eq:2.2bgk}
\end{equation}

The  stationary kinetic  half-space  problem is  now 
\begin{align}\label{bgkhalfspacebg}
\begin{aligned}
v \partial_x \varphi =\frac{1}{\epsilon}Q(f)= -\frac{1}{\epsilon}\left(\varphi-\left(\rho+vq\right)M(v)\right)
\end{aligned}
\end{align}
together with the condition
$$ \int_{-\infty }^\infty \left(v - \frac{v^2}{a} \right)\varphi d v= r_-(0)= q_0(0)-a \rho_0(0).
$$
The resulting solution of the half -space problem at infinity has the form 
$$
\varphi(\infty,v) =\left(\rho_\infty +vq_\infty\right)M(v).
$$

Following  again  \cite{coron} we consider  in this case orthonormal Hermite  polynomials $P_k(v), k=0, \ldots ,2N$ on $[-\infty,\infty]$ defined by $P_0=\frac{1}{\pi^{1/4}},P_1 = \frac{\sqrt{2}}{\pi^{1/4}}v$ and
\begin{align*}
	v P_k(v) = \alpha_{k+1} P_{k+1} + \alpha_k P_{k-1}, k=1, \ldots, 2N-1
\end{align*}
with $\alpha_k = \sqrt{\frac{k}{2}}  $, compare again \cite{coron}.
Note that $$\sqrt{2}P_2= 2 v^2P_0 -P_0, \sqrt{\frac{3}{2}} P_4=  v^4 P_0 - \frac{3}{\sqrt{2}} P_2 - \frac{3}{4}P_0. $$
Define the associated functions $$H_k = P_k \exp(-\frac{v^2}{2}).$$
Using  the  transformations $v = \sqrt{2} \tilde v $  and $f= \tilde f H_0$
the kinetic equation can be rewritten as 
\begin{equation}
\label{transkin}
	\partial_t f +\sqrt{2} v\partial_xf= -\frac{1}{\epsilon}\left(f-\left(H_0 g_0 +H_1 g_1 \right)\right)
\end{equation} 
with 
$$
g_0= \int H_0(v)  f(v) dv =\frac{\rho}{\sqrt{2}}, \; g_1 =  \int H_1(v)  f(v) dv =\frac{q}{\sqrt{2}}.
$$

%
For the coupling conditions for the kinetic equation and for   the macroscopic equations we proceed as in the previous section.
However,  the kinetic coefficient  $\delta$
is different due to the change of the underlying kinetic model.

\subsection{The discrete velocity model}
\label{sec:dvmbgk}
Proceeding as before  we discretize the BGK-equation (\ref{transkin}) in velocity space and obtain a kinetic discrete velocity model  for the discrete distribution functions 
$f_i(x,t), i= 1, \ldots,  2N$ as 

\begin{align}\label{eq:kineticbgk}
	\begin{aligned}
		\partial_t f_i + \sqrt{2} v_i\partial_xf_i &= -\frac{1}{\epsilon}\left(f_i-M_i\right)
	\end{aligned}
\end{align}
with  the symmetric velocity discretization $$-\infty < v_1< v_2 < \cdots < v_N < 0 < v_{N+1}< \cdots < v_{2N-1} < v_{2N} < \infty.$$
We choose $v_i, i=1, \ldots, 2N$ to be the Gauß-Hermite  points on $[-\infty,\infty]$ and $w_i$ the associated Gauss-Hermite weights.
Defining   the moments
\begin{align*}
g_j = \sum_{i=1}^{2N}H_j(v_i) f_i\ , j	= 0, \ldots, 2N-1
\end{align*}  
the discrete  linearized Maxwellian $M_i$ is given by
\begin{align}
	M_i = w_i e^{v_i^2}( H_0 (v_i)g_0 + H_1(v_i ) g_1 ).
\end{align}
The choice of discrete Maxwellian yields for $k=0,1$
\begin{align}
	\label{mommaxbgk}
	\sum_{i=1}^{2N}M_i H_k(v_i) = g_k
\end{align}
and
\begin{align}
\sum_{i=1}^{2N}M_i H_k(v_i) = 0, k=2, \ldots,2N-1
\end{align}


due to discrete orthogonality.
%
Moreover,  with the present choice of discretization points and polynomials we have
\begin{align*}
\sum_{i=1}^{2N}H_{2N} (v_i)f_i\ =0.
\end{align*}
Let now $G=(u,g)^T$ with $u=(g_0,g_1)^T$ and   $g = (g_2,\ldots,g_{2N-1})$ be defined as before 
and consider   the  Vandermonde like  matrix
\begin{align*}
	S
	=\begin{pmatrix}
		P_0(v_1) &\cdots&P_0(v_{2N})\\
		\vdots&	&\vdots\\
		P_{2N-1}(v_1)&\cdots&P_{2N-1}(v_{2N})\\
	\end{pmatrix}	 \in \R^{2N \times 2N}
\end{align*}
together with the matrix 
$E=\text{diag}(e^{-v_1^2/2},\cdots,e^{-v_{2N}^2/2})  $. Then, 
  the  variables $f$ are transformed into   the moments $ SEf = G.$ 

Using the recursion formula of the Hermite polynomials and the above remarks,
the kinetic equation is rewritten in moment variables as
\begin{align}\label{eq:macro_4eq_hbgk}
	\begin{aligned}
		\partial_t g_0 + \sqrt{2} \alpha_1 \partial_x g_1 &= 0\\
		\partial_t g_1+ \sqrt{2} \partial_x (\alpha_2 g_2 + \alpha_1g_0 )&= 0\\
		\partial_t g_{2} + \sqrt{2}\partial_x (\alpha_{3} g_{3} + \alpha_2 g_1) &=  - \frac{1}{\epsilon}g_{2}\\
		\partial_t g_{k} + \sqrt{2} \partial_x (\alpha_{k+1}g_{k+1} + \alpha_{k} g_{k-1})&= - \frac{1}{\epsilon}g_{k} , k=3, \ldots, 2N-2\\
		\partial_t g_{2N-1}+ \sqrt{2}\partial_x (\alpha_{2N-1}g_{2N-2})&= - \frac{1}{\epsilon}g_{2N-1} \end{aligned}
\end{align}
%
and renaming gives for the first 3 equations
\begin{align}
	\begin{aligned}
		\partial_t \rho+ \partial_x q&= 0\\
		\partial_t q+   \partial_x (2 g_2 +\rho )&= 0\\
		\partial_t g_{2} + \sqrt{2}\partial_x (\alpha_{3} g_{3} + \alpha_2 \frac{q}{\sqrt{2}}) &=  - \frac{1}{\epsilon}g_{2}.\end{aligned}
\end{align}
Again  this system leads in  the limit $\epsilon \rightarrow 0$ directly to  the wave equation
(\ref{eq:2.2bgk}).

\subsection{The discrete layer problem and coupling conditions}
\label{sec:disclayerbgk}

The corresponding discrete kinetic  layer equation is
\begin{align}
	\begin{aligned}
	 \sqrt{2} \alpha_1 \partial_x g_1 &= 0\\
 \sqrt{2} \partial_x (\alpha_2 g_2 + \alpha_1g_0 )&= 0\\
 \sqrt{2}\partial_x (\alpha_{3} g_{3} ) &=  - g_{2}\\
		\sqrt{2} \partial_x (\alpha_{k+1}g_{k+1} + \alpha_{k} g_{k-1})&= - g_{k} , k=3, \ldots, 2N-2\\
		\sqrt{2}\partial_x (\alpha_{2N-1}g_{2N-2})&= -g_{2N-1}.
	\end{aligned}
\end{align}

One obtains   in terms of  $\rho$ and $q$ that $q=C $ and $D= 2 g_2 +\rho$ 
for constants $C \in \R$ and $D \in \R^+$.

 Moreover, in matrix form the equations for $g_2, \ldots, g_{2N-1}$ are given as in the previous section  by the linear system 
$$
\sqrt{2} \partial_x g = - A_{22}^{-1} g 
$$
with the symmetric tridiagonal matrix $A_{22} \in \R^{2(N-1) \times
  2(N-1)}$ as in \eqref{eq:A_tridiag}, of course containing the 
values of $\alpha_3\ldots,\alpha_{2N-1}$ associated to the Hermite polynomials. 
As previously the matrix of  eigenvectors of $A_{22}$ associated to  positive eigenvalues is denoted by $R_2^+$.

For the analytical solution of the coupling problem, note that after reordering the velocities as
$(v_N, \ldots, v_1, v_{N+1}, \ldots , v_{2N})$ we have with reordered quantities  $G=S E f$ where 
	$$
	E=\begin{pmatrix}
	\bar{E} & \\
	& \bar{E}
	\end{pmatrix},\quad \bar{E}=\text{diag}(e^{-v_{N+1}^2/2},\cdots,e^{-v_{2N}^2/2}).
	$$ 
	$S$ is defined by the same expression as \eqref{VS} with  $P_k$ being the orthonormal Hermitian polynomials. Thanks to the expression of $B_1$ and $B_2$, we know that $B_1E^{-1}=\bar{E}^{-1}B_1$ and $B_2E^{-1}=\bar{E}^{-1}B_2$. Therefore, the coupling condition are, as in \eqref{coupling-DV}, given by 
	$$
	\mathcal{B}
	\begin{pmatrix}
	 S^{-1}G^1(0,t),
	 S^{-1}G^2(0,t),
	\ldots
	 S^{-1}G^n(0,t)
	\end{pmatrix}^T=0.
	$$
%
Moreover, we have again
		$$
		S^{-1}=\begin{pmatrix}
			W & \\
			& W
		\end{pmatrix}S^T,
		$$
	where $W$ is an $N\times N$ diagonal matrix with 
	positive entrances given here by 
	 the Gaussian-Hermite quadrature weights  $w_{N+1},w_{N+2},...,w_{2N}$.
Then, the  proof proceeds exactly along the same lines as before.

For the numerical determination of the macroscopic  invariants we  compute the matrix $T \in \R^{(2N)  \times (N+1)}$ as 
%

\begin{align*}
	T=	\begin{pmatrix}
		T_{11} & T_{12}\\[1mm]
		0 & R_2^+ 
	\end{pmatrix}
\end{align*}
with
\begin{align*}
	T_{11}= \frac{1}{\sqrt{2}} \begin{pmatrix}
		1&0  \\
		0&1
	\end{pmatrix}, \quad
	T_{12}=	- \sqrt{2}\begin{pmatrix}
		e_1^T R_2^+ \\
		0
	\end{pmatrix},	
\end{align*}
where $e_1^T =(1,0,\ldots,0)$ is the unit vector.
Using these matrices and proceeding exactly as in the previous section, 
we obtain for symmetric nodes  the invariants
\begin{align*}
	D +  \delta C.
\end{align*}
This gives  as discussed above $n-1$  equations at each  node. Together with the equality of fluxes 
$$
\sum_{i=1}^n C^i =0$$
we have therefore $n$
coupling conditions as required.
Moreover, as in the bounded case we can compute the  values of all moments at the boundary and in particular 
\begin{align}
	\label{rho00}
	\rho^i(x=0) = D^i-2 e_1^T R_2^+ \gamma^i  .
\end{align}

In the section on numerical results we also compute an approximation of the kinetic distribution functions at the node
on all edges using the Hermite expansion.

That means, in this case, we compute  $f^i=f^i(x=0,v) $ for $i=1,2,3$ and $v \in \R$ by 

\begin{align}
	\label{spectraldist}
f(v) = H_0(\frac{v}{\sqrt{2}})   \sum_{k=0}^{2N-1} g_k H_k (\frac{v}{\sqrt{2}}),
\end{align}
where
\begin{align*}
g_0= \frac{D}{\sqrt{2}}, g_1 =\frac{C}{\sqrt{2}}
\end{align*}
and for $k=2, \ldots, 2N-1$ 
\begin{align*}
g_k =e_{k-1}^T R_2^+ \gamma.
\end{align*}

\subsection{Approximate coupling conditions}
\label{halfspacemarshakbgk}

Equalizing  the  positive half-fluxes on each edge
gives here 
\begin{align}
	\delta = \frac{\sqrt{ \pi} (n-2)}{\sqrt{2}n}
\end{align}
For example for $n=3$, we obtain for  the  factor 
$\delta = \frac{\sqrt{ \pi} }{\sqrt{2} 3} \sim 0.4178$. 
$n \rightarrow \infty$ gives $\delta = \frac{\sqrt{ \pi}}{\sqrt{2}} \sim 1.253$.
	The approach via half moment approximations of the kinetic problem from \cite{BK18} leads to 
	$$
	\delta = \frac{n-2}{n} \frac{ 4+\frac{n-2}{n}\sqrt{2\pi}}{\sqrt{2\pi}+2 \frac{n-2}{n}   }
	$$

	$n=3$ gives here   $\delta =  \frac{1}{3} \frac{
          4+\frac{1}{3}\sqrt{2\pi}}{\sqrt{2\pi}+ \frac{2}{3}    } \sim
        0.5079 $, while
	$n=\infty$ gives $\delta =  \frac{ 4+\sqrt{2\pi}}{\sqrt{2\pi}+2    }\sim 1.4438$.


\subsection{Numerical results}
\label{numericalbgk}
As in Section \ref{numerical1} we restrict ourselves to fully symmetric coupling conditions.
Using the Vandermonde like matrix $S$ in a naive way the  problem is ill-conditioned for large $N$ although normalized Hermite polynomials and the associated points are used, \cite{Gautschi}.  This problem can be removed by  using a simple rescaling of $S$.
Numerical results are shown in Fig. \ref{fig2.2} (left) for the case $n=3$ and Fig. \ref{fig2.2} (right) for the case of infinitely many edges.
Further numerical experiments, for example for $N=3000$, did achieve an error increment of the order $e \sim 10^{-9}$.
Comparing the results for large $N$ with the approximate methods in the previous section shows again  the very good approximation quality of 
the half-moment approximation   given in detail in \cite{BK18}.

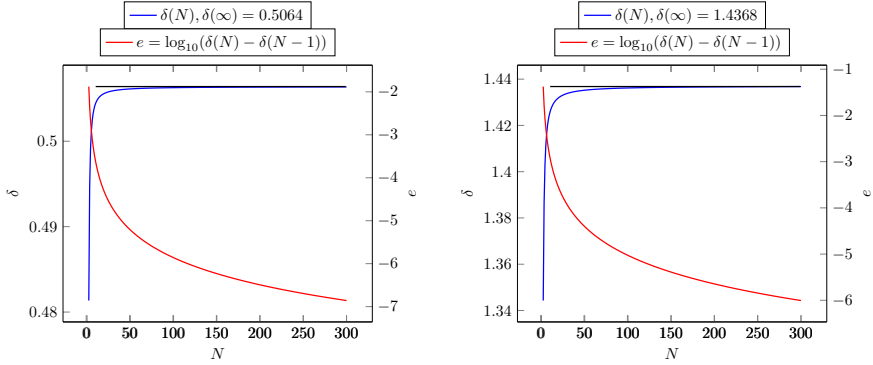
\begin{figure}[h!]
	\center
	\externaltikz{funcr12211}{
	\begin{tikzpicture}[scale=0.6]

	\begin{axis}[ ylabel = $\delta$, ylabel near ticks, 
	xlabel =  $N$, xlabel near ticks,
	legend style = {at={(0.5,1)},xshift=0.0cm,yshift=0.8cm,anchor=south},
	legend columns= 1,
	yticklabel style={/pgf/number format/.cd,fixed,precision=3},
	axis y line*=left,
	]
	\addplot[color = blue, thick] file{datahermite/datapaper/delta_hermite3.txt};
	\addplot[black,thick] coordinates {(10,0.5064)  (299,0.5064)};	
	\addlegendentry{$\delta(N), \delta(\infty)= 0.5064$}						
	\end{axis}
	\begin{axis}[ylabel =$e$,ylabel near ticks, 
	legend style = {at={(0.5,1)},xshift=0.0cm,yshift=0.2cm,anchor=south},
	legend columns= 1,
	axis y line*=right,
	]
	
	\addplot[color = red,thick] file{datahermite/datapaper/error_hermite3.txt};
	\addlegendentry{$e=\log_{10}(\delta(N) - \delta(N-1))$}
	
	\end{axis}
	\end{tikzpicture}
}
	\externaltikz{funcr122yy11}{
	\begin{tikzpicture}[scale=0.6]

	\begin{axis}[ ylabel = $\delta$, ylabel near ticks, 
	xlabel =  $N$, xlabel near ticks,
	legend style = {at={(0.5,1)},xshift=0.0cm,yshift=0.8cm,anchor=south},
	legend columns= 1,
	yticklabel style={/pgf/number format/.cd,fixed,precision=3},
	axis y line*=left,
	]
	\addplot[color = blue, thick] file{datahermite/datapaper/delta_hermite.txt};
	\addplot[black,thick] coordinates {(10,1.4368)  (299,1.4368)};	
	\addlegendentry{$\delta(N), \delta(\infty)= 1.4368$}						
	\end{axis}
	\begin{axis}[ylabel =$e$,ylabel near ticks, 
	legend style = {at={(0.5,1)},xshift=0.0cm,yshift=0.2cm,anchor=south},
	legend columns= 1,
	axis y line*=right,
	]
	
	\addplot[color = red,thick] file{datahermite/datapaper/error_hermite.txt};
	\addlegendentry{$e=\log_{10}(\delta(N) - \delta(N-1))$}
	
	\end{axis}
	\end{tikzpicture}
}
	\caption{Coefficient $\delta  $ depending on $N$ for $n=3$ (left) and $n=\infty$ (right) using Hermite  polynomials and points. Associated increment depending on $N$. The black line denotes the limit value $\delta  (\infty)$ of $\delta  (N)$.}
	\label{fig2.2}
\end{figure}

\section{Numerical comparison of solutions on the network}
\label{network}


%
%
%
%

To illustrate the above results,  we consider the case $v \in \R$ and a single node  with 3 edges.
As initial conditions for the kinetic equation we choose equilibrium distributions $f^i(x,v) = \rho^i (x) M(v)$,
with  macroscopic densities $\rho^1=1$, $\rho^2=0$ and $\rho^3=2$. The resulting fluxes are   $q^j = 0$ $j=1,\dots,3$.
These data are also prescribed at the  outer boundaries.

In Figure \ref{fig_n1}  on the left  the densities $\rho^i$ on  the three edges are   displayed at time $t=0.1$.
The kinetic solution is computed by a standard Finite-difference scheme and shown for  $\epsilon= 10^{-1}, \epsilon= 10^{-2}$ and   $\epsilon= 5 \cdot  10^{-3}$. In the right figure a zoom to the solution on edge 2 
is shown.
Up to   kinetic  layers of order $\mathcal{O} (\epsilon) $ we observe a very good agreement of the half-moment and spectral coupling with the kinetic  model. 
Also the  approximation via half-fluxes is relatively close to the kinetic results with a deviation of approximately $10\%$.
The value of the density of the  kinetic solution at the node determined by the spectral method (\ref{rho00}) is shown with a red marker and agrees very well with the Finite-Difference kinetic solution at the node.

In Figure \ref{fig_n2}  on the left a  further vertical  zoom  is shown for  the density on edge  $2$. 
The kinetic solution is   shown for  $ \epsilon= 10^{-2}$ and   $\epsilon= 5 \cdot 10^{-3}$. 
On this scale the deviation of the spectral solution from the solution obtained from the half moment approximation is clearly seen.
In the right figure a  zoom to the solution on edge 2 near  the node 
is shown displaying the  kinetic layer near the node in more detail.

 Figure \ref{fig_n3}  on the left shows the kinetic distribution functions on all edges computed by the Finite-Difference method with 
  $\epsilon= 5 \cdot 10^{-4}$ and $\Delta x = 10^{-4}$ and by the spectral method with $N=1000$ using (\ref{spectraldist}). Near the discontinuity of the distribution function Gibbs oscillations are observed for the spectral method as expected. On the right a  zoom to the kinetic solution  on edge $2$  is shown computed by FD and the spectral method  with and without a Fejer-type filter.  

\begin{figure}											
	\externaltikz{net12}{
		\begin{tikzpicture}[scale=0.7]
			\begin{axis}[ylabel = $\rho$,xlabel = $x$,
				legend style = {at={(0.5,1)},xshift=0.2cm,yshift=-0.0cm,anchor=south},
				legend columns= 1,
				xmin = 0., xmax = 0.3	
				]

					\addplot[color = brown,thick] file{data_matlab/rho_kinetic_1_0001.txt};
				\addlegendentry{Kinetic $1$ }
				
					\addplot[color = blue,thick] file{data_matlab/rho_kinetic_2_01.txt};
						\addlegendentry{Kinetic $2$}
				\addplot[color = green,thick] file{data_matlab/rho_kinetic_3_01.txt};
		\addlegendentry{Kinetic $3$}
			\addplot[mark=none, thick,cyan, domain = 0:0.1] {1-0.4178*1/(1+0.4178*1)};
		\addlegendentry{Half-fluxes $2$}
		
		\addplot[mark=none, black, thick, domain = 0:0.1] {1-0.5064*1/(1+0.5064*1)};
		\addlegendentry{Spectral/Half-moment $2$}
		
				\addplot[color = blue!,thick] file{data_matlab/rho_kinetic_2_001.txt};
				\addplot[color = blue,thick] file{data_matlab/rho_kinetic_2_0005.txt};
				
				\addplot[color = green,thick] file{data_matlab/rho_kinetic_3_001.txt};
					\addplot[color = green,thick] file{data_matlab/rho_kinetic_3_0005.txt};
				
			
			

				\addplot[mark=none, black, thick, domain = 0.1:0.3] {0};
			
				
			\end{axis}
		\end{tikzpicture}
			}	
		\externaltikz{net233}{
		\begin{tikzpicture}[scale=0.7]
	\begin{axis}[ylabel = $\rho$,xlabel = $x$,
	legend style = {at={(0.5,1)},xshift=0.2cm,yshift=-0.0cm,anchor=south},
	legend columns= 1,
	xmin = 0.0, xmax = 0.12,
	ymin = 0.5, ymax=0.81	
	]
	

	\addplot[color = blue,thick] file{data_matlab/rho_kinetic_2_01.txt};
\addlegendentry{Kinetic $2$}

	\addplot[mark=none, thick,cyan,domain = 0:0.1] {1-0.4178*1/(1+0.4178*1)};
\addlegendentry{Half-fluxes $2$}
\addplot[mark=none, magenta, thick, domain = 0:0.1] {1-0.5079*1/(1+0.5079*1)};

\addlegendentry{Half moment $2$}

	 \addplot[mark=none, black, thick, domain = 0:0.1] {1-0.5064*1/(1+0.5064*1)};
\addlegendentry{Spectral $2$}

	\addplot[mark=none, thick,red,domain = 0:0.005] {0.80036};
\addlegendentry{Spectral $\rho^2(x=0) $}

	\addplot[color = blue,thick] file{data_matlab/rho_kinetic_2_001.txt};

	\addplot[color = blue,thick] file{data_matlab/rho_kinetic_2_0005.txt};


	\addplot[mark=none, black, thick, domain = 0.1:0.3] {0};

	\end{axis}
	\end{tikzpicture}
}	
	\caption{$\rho$ for all edges, kinetic solution for  $\epsilon= 10^{-1}, \epsilon= 10^{-2}$ and   $\epsilon= 5 \cdot 10^{-3}$ at time $t=0.1$ (left). Zoom to solution on edge 2 (right).}
	\label{fig_n1}
\end{figure}
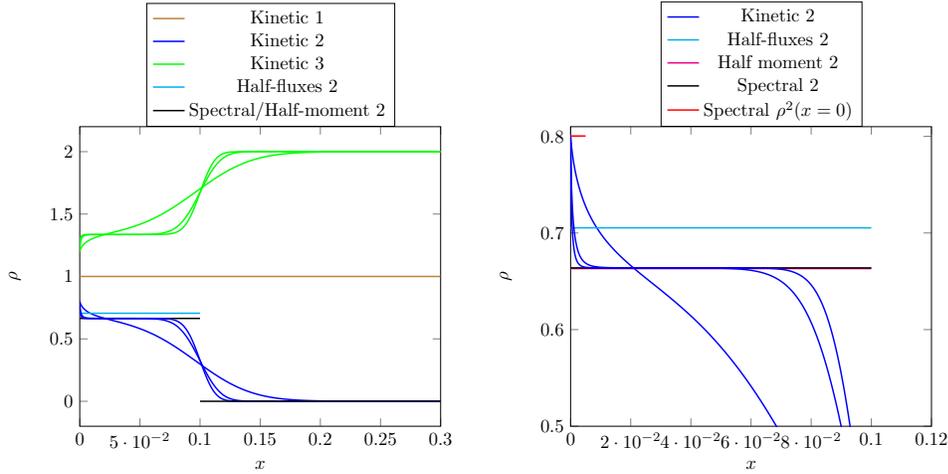

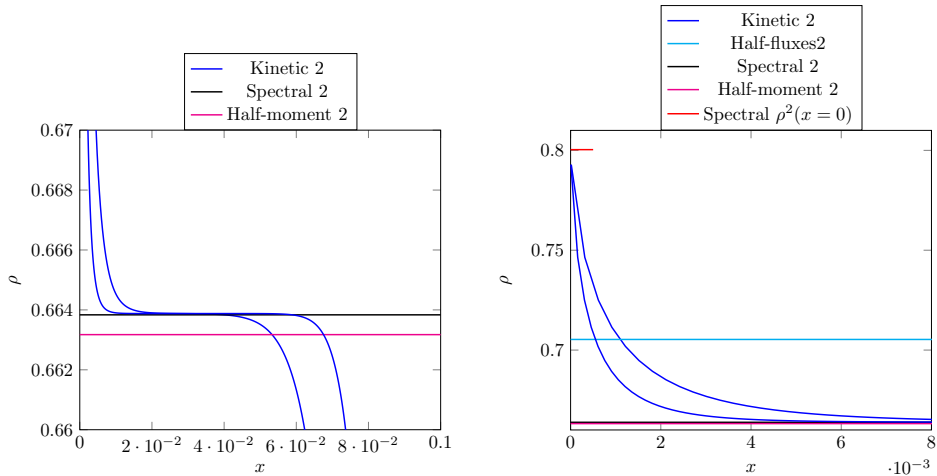
\begin{figure}											
	\externaltikz{net33}{
		\begin{tikzpicture}[scale=0.7]
		\begin{axis}[ylabel = $\rho$,xlabel = $x$,
			legend style = {at={(0.5,1)},xshift=0.2cm,yshift=-0.0cm,anchor=south},
			legend columns= 1,
			xmin = 0.0, xmax = 0.1,
			ymin = 0.66, ymax=0.67,
	  y tick label style={/pgf/number format/precision=4},
			 ]

			
%
%
				\addplot[color = blue,thick] file{data_matlab/rho_kinetic_2_001.txt};
			\addlegendentry{Kinetic $2$};

			\addplot[mark=none, black, thick, domain = 0:0.1] {1-0.5064*1/(1+0.5064*1)};
			\addlegendentry{Spectral $2$}

			\addplot[mark=none, magenta, thick, domain = 0:0.1] {1-0.5079*1/(1+0.5079*1)};
			\addlegendentry{Half-moment $2$}

			\addplot[color = blue,thick] file{data_matlab/rho_kinetic_2_0005.txt};

		\end{axis}
	\end{tikzpicture}
}
	\externaltikz{net4}{
		\begin{tikzpicture}[scale=0.7]
	\begin{axis}[ylabel = $\rho$,xlabel = $x$,
	legend style = {at={(0.5,1)},xshift=0.2cm,yshift=-0.0cm,anchor=south},
	legend columns= 1,
	xmin = 0.0, xmax = 0.008,
	ymin = 0.66, ymax=0.81,
	 y tick label style={/pgf/number format/precision=4},
	]


	\addplot[color = blue,thick] file{data_matlab/rho_kinetic_2_001.txt};
	\addlegendentry{Kinetic $2$}
	
		\addplot[mark=none, thick,cyan,domain = 0:0.1] {1-0.4178*1/(1+0.4178*1)};
	\addlegendentry{Half-fluxes$2$}
	
	\addplot[mark=none, black, thick, domain = 0:0.1] {1-0.5064*1/(1+0.5064*1)};
	\addlegendentry{Spectral $2$}
	
	\addplot[mark=none, magenta, thick, domain = 0:0.1] {1-0.5079*1/(1+0.5079*1)};
	\addlegendentry{Half-moment $2$}
	
		\addplot[mark=none, thick,red,domain = 0:0.0005] {0.80036};
	\addlegendentry{Spectral $\rho^2(x=0) $}
	\addplot[color = blue,thick] file{data_matlab/rho_kinetic_2_0005.txt};

%
	\end{axis}
	\end{tikzpicture}
			}
	\caption{$\rho$ for  edge $2$, kinetic solution for  $\epsilon= 10^{-2}$  and  $\epsilon= 5 \cdot 10^{-3}$  at time $t=0.1$, vertical zoom(left). Horizontal zoom to the kinetic layer near the node (right).}
	\label{fig_n2}
\end{figure}

\begin{figure}											
	\externaltikz{net51}{
		\begin{tikzpicture}[scale=0.7]
			\begin{axis}[ylabel = $f$,xlabel = $v$,
				legend style = {at={(0.5,1)},xshift=0.2cm,yshift=-0.0cm,anchor=south},
				legend columns= 2,
				xmin = -5, xmax = 5,
				y tick label style={/pgf/number format/precision=4},
				]

				\addplot[color = brown!50,thick] file{data_matlab/f_hermite1.txt};
				\addlegendentry{Spectral $1$}
				
				\addplot[color = brown,thick] file{data_matlab/f_kinetic_1.txt};
				\addlegendentry{Kinetic $1$}
				\addplot[color = blue!50,thick] file{data_matlab/f_hermite2.txt};
				\addlegendentry{Spectral $2$}
					\addplot[color = blue,thick] file{data_matlab/f_kinetic_2.txt};
				\addlegendentry{Kinetic $2$}
				\addplot[color = green!50,thick] file{data_matlab/f_hermite3.txt};
				\addlegendentry{Spectral $3$}

				\addplot[color = green,thick] file{data_matlab/f_kinetic_3.txt};
				\addlegendentry{Kinetic $3$}
				
					\addplot [thick] coordinates {(0.0,0.0) (0.0,0.6)};
				%
				%

			
			\end{axis}
		\end{tikzpicture}
	}
		\externaltikz{net71}{
		\begin{tikzpicture}[scale=0.7]
			\begin{axis}[ylabel = $f$,xlabel = $v$,
				legend style = {at={(0.5,1)},xshift=0.2cm,yshift=-0.0cm,anchor=south},
				legend columns= 1,
				xmin = -0.6, xmax = 0.6,
				ymin = 0.2, ymax=0.5,
				y tick label style={/pgf/number format/precision=4},
				]

				\addplot[color = brown,thick] file{data_matlab/filtere/f_hermite2_0_e.txt};
				\addlegendentry{Spectral unfiltered}
				\addplot[color = red,thick] file{data_matlab/filtere/f_hermite2_2_e.txt};
				\addlegendentry{Spectral filtered}
				\addplot[color = blue,thick] file{data_matlab/f_kinetic_2.txt};
				\addlegendentry{Kinetic $2$}
				
				\addplot [thick] coordinates {(0.0,0.0) (0.0,0.6)};
				%

				
			\end{axis}
		\end{tikzpicture}
		
	}
	
	\caption{Left: Kinetic solutions for all edges at node $x=0$
          at time $t=0.1$ computed by FD method with   $\epsilon= 5
          \cdot 10^{-3}$ and $\Delta x = 10^{-4}$ and by spectral
          method with $N=1000$. Right: Zoom to kinetic solution  on edge $2$  computed by FD and spectral method  with and without  filtering. }
\label{fig_n3}
\end{figure}
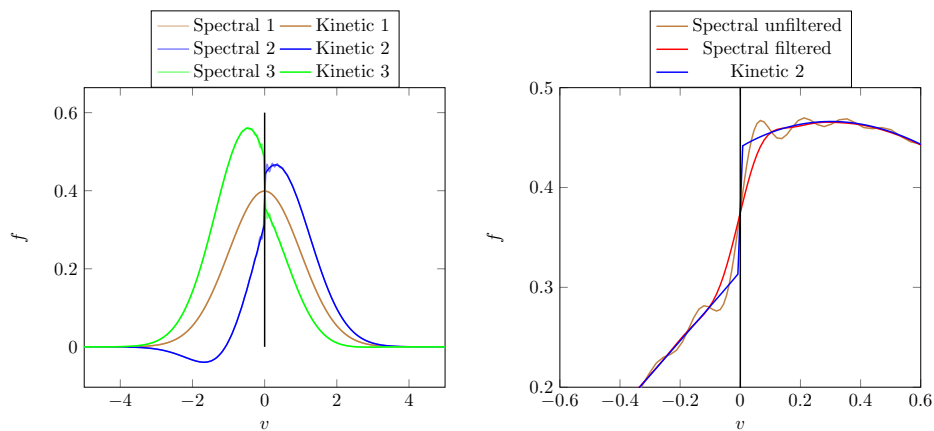

\section{Conclusion and Outlook}
\label{conclusion}

In this work we have considered the derivation of coupling conditions for a macroscopic equation on networks from the underlying kinetic equations and conditions.
In particular, we have discussed here the case of a kinetic linearized BGK type model and the associated 
wave equation.
The procedure is based on an asymptotic analysis  of the situation near the nodes and the investigation of 
the kinetic layer near the nodes and the associated coupled kinetic half-space problems. 
For the numerical solution a very accurate  spectral procedure to determine  the macroscopic coupling conditions has been developed. From the analytical side we have proven well-posedness of the coupled half-space problems for general  BGK-type discrete velocity models.

The approach can be extended to more  complicated problems like the full BGK model with the  linearized Euler equations as limit equations.  The investigation requires, additionally to the discussion of the kinetic half-space problems, also the investigation of related viscous layers.
This will be considered in a forthcoming publication.

The validity of a higher order asymptotic expansion of the kinetic coupling problem and a  rigorous proof of convergence of the kinetic solution on the network towards the macroscopic 
solution
 will be also considered in future work following the general approach developed in  \cite{ZY22}.

Finally, we mention, that codes and data that allow readers to reproduce the most important numerical results, in particular the determination of the coupling coefficients,  are available at 

https://gitlab.rhrk.uni-kl.de/klar/kinetic-network.git

\end{document}